\newtheorem{lemma}{Lemma}[section]
\newcommand{\RNum}[1]{\uppercase\expandafter{\romannumeral #1\relax}}
\newtheorem{theorem}{Theorem}[section]
\newtheorem{remark}{Remark}[section]
\newcommand{\beq}{\begin{equation}}
\newcommand{\eeq}{\end{equation}}
\newtheorem{property}{Property}
\newtheorem{thm}{Theorem}[section]
\newtheorem{lem}[thm]{Lemma}
\begin{document}
A Second Type Of Higher Order Generalised Geometric Polynomials and Higher Order Generalised Euler Polynomials.

\vspace{1cm} 

\tiny

\noindent Sithembele Nkonkobe$^a$, Levent Kargin$^b$ , Bayram \c{C}ekim$^c$, Roberto B. Corcino$^d$, Cristina B. Corcino$^e$
\vspace{5mm}

\tiny

\noindent $^a$ Department of Mathematical Sciences, Sol Plaatje University, Kimberley, 8301, South Africa,  snkonkobe@gmail.com

\noindent$^b$ Akdeniz University, Faculty of Science, Department of Mathematics, Antalya, Turkey, lkargin@akdeniz.edu.tr

\noindent $^c$ Gazi University, Faculty of Science, Department of Mathematics, Teknik okullar, Ankara, Turkey,\\\indent bayramcekim@gazi.edu.tr
 
 \noindent$^d$ Research Institute for Computational Mathematics and Physics, Cebu Normal University,  Cebu City, Phillipines,\\\indent 600, rcorcino@yahoo.com
 
 \noindent$^e$ Mathematics Department, Cebu Normal University, Cebu City, Phillipines, 6000, cristinacorcino@yahoo.com\normalsize
\section*{Abstract}

In this study we introduce a second type of higher order generalised geometric polynomials. 
This we achieve by examining the generalised stirling numbers $S(n,k,\alpha,\beta,\gamma)$ [Hsu \& Shiue,1998] for some negative arguments.
 We study their number theoretic properties,  asymptotic properties, and study their combinatorial properties using the notion of barred preferential arrangements. We also proposed a generalisation of the classical Euler polynomials and show how these Euler polynomials are related to the second type of higher order generalised geometric polynomials.

\noindent\fontsize{10}{1}\selectfont     
Mathematics Subject Classifications : 05A15, 05A16, 05A18, 05A19,   11B73, 11B83
\\\textbf{Keyword(s)}:Preferential arrangement, barred preferential arrangement, geometric polynomial, Euler polynomials.
\normalsize
\vspace{2mm}

\section{Introduction}

A \textit{barred preferential arrangement}(see \cite{barred:2013,Pipenger hypercube}) is formed when a number of identical bars are inserted in-between the blocks of a given preferential
arrangement of $X_n$($n$-element set). For instance, consider the following barred preferential arrangements(BPAs) of $X_8$;

\RNum{1}). $2\quad58\quad4|\quad|13\quad6|7$,

\RNum{2}). $|\quad|578\quad 3\quad1|2\quad146|$.

The BPA in \RNum{1} has three bars, hence four sections. The first section has three blocks formed by the elements $\{2\}$, $\{5,8\}$, $\{4\}$.
The second section is empty i.e the section between the first two bars(from left to right). The third section has two blocks formed by the
elements $\{1,3\}$, and $\{6\}$. The fourth section has only one block $\{7\}$.

The BPA in \RNum{2} has four bars hence five sections are formed. The first two sections are empty. The third section has three blocks. The
fourth section has two blocks. The fifth section is empty.

In \cite{Chains in power set Nelsen 91} Nelsen and Schmidt proposed the generating function,
\begin{equation}\label{equation:5}
\frac{e^{\gamma t}}{2-e^t}.
\end{equation}

The generating function for $\gamma=0$ is known to be that of the sequence of number of preferential arrangements (see
\cite{gross:1962,mendelson:1982}).
 In the manuscript Nelsen and Schmidt for $\gamma=2$ interpreted the generating function as that of the number of chains in the power set of an
 $n$-element set $X_n$. They then asked ``could there be other combinatorial structure associated either with $X_n$ or the power set of $X_n$
 whose integer sequences are generated by the generating function in \eqref{equation:5} for other values of $\gamma$?". We will refer to this
 question as the Nelsen-Schmidt question, and to the generating function in \eqref{equation:5} as the Nelsen-Schmidt generating function.

 In answering the Nelsen-Schmidt question combinatorial interpretations of generating functions which are generalising the Nelsen-Schmidt
 generating function exist in the literature (see \cite{Nkonkobe & Murali Nelesn-Schmidt generating function,A comb analysis of geo polynomials,Our paper on Generalised barred preferential arrangements,Muralis paper on the Nelsen-Schmidt generating function}). In this manuscript we
 propose an alternative  generating function\\ $(1-\alpha t)^{\frac{-\gamma}{\alpha}}\begin{bmatrix}
 \frac{1}{1-x[(1-\alpha t)^{\frac{-\beta}{\alpha}}-1]}
 \end{bmatrix}^\lambda$, which also generalises that of Nelsen and Schmidt, and give combinatorial interpretation of the integer sequences that
 arise from the generating function. This is analogous to remark 3.2 of \cite{A comb analysis of geo polynomials}.

Geometric polynomials are well known in the literature\cite{GeoP1,GeoP2,GeoP3,GeoP4,GeoP5,GeoP6,Nkonkobe & Murali Nelesn-Schmidt generating function}. A generalisation of geometric polynomials is higher order generalised geometric polynomials which seem to first appear in \cite{Higher order geometric polynomials}, and their generating function is $(1+\alpha t)^{\frac{\gamma}{\alpha}}\begin{bmatrix}
 \frac{1}{1-x[(1+\alpha t)^{\frac{\beta}{\alpha}}-1]}
 \end{bmatrix}^\lambda$.  The polynomials have been extensively studied in both  \cite{A comb analysis of geo polynomials,paper on recurrence relation on H G polynomials by Levent}.

In this study we propose a new type of higher order generalised geometric polynomials by proposing the following generating function  
 $(1-\alpha t)^{\frac{-\gamma}{\alpha}}\begin{bmatrix}
  \frac{1}{1-x[(1-\alpha t)^{\frac{-\beta}{\alpha}}-1]}
  \end{bmatrix}^\lambda$.
 
 The classical Euler polynomials and some of their generalisations are well known in the literature for instance in \cite{EulerP1,EulerP2,EulerP3,EulerP4,EulerP5,EulerP6,EulerP7,EulerP8,EulerP9,EulerP10}. In this study we define a new generalisation of  Euler polynomials and show how these new Euler polynomials are related to the new type of higher order generalised geometric polynomials.  

A generalisation $S(n,k,\alpha,\beta,\gamma)$ of the classical stirling numbers  is given in \cite{A unified approach to generalized Stirling numbers} in the following way, $(t|\alpha)_n=\sum\limits_{k=0}^{n}S(n,k,\alpha,\beta,\gamma)(t-\gamma|\beta)_n$, where $(t|\alpha)_n$ is the generalised factorial polynomial $(t|\alpha)_n=\prod\limits_{k=0}^{n-1}(t-k\alpha)$, where $n\geq1$ and $\alpha,\beta,\gamma$ real or complex not all equal to zero. For  both the higher order generalised geometric polynomials and the higher order generalised Euler polynomials we make use of these stirling numbers.

\section{Combinatorial Interpretations.}
\begin{property}\cite{Paper with interpretation of G-stirling numbers}
\label{property:1}Given $n$ elements and a single cell with $\gamma$ compartments such that $\alpha|\gamma$, such that when an element lands on
one of the compartments, the compartment splits into $\alpha+1$ compartment. The total number of ways of distributing $n$ elements in the
$\gamma$ compartments is $(\gamma|-\alpha)_n$.
\end{property}
\begin{lemma}\label{lemma:2}\cite{Paper with interpretation of G-stirling numbers}
Suppose $\alpha,\beta,\gamma$ are non-negative integers such that $\alpha$ divides both $\beta$ and $\gamma$. Assume there are $k+1$ cells,
where each of the first $k$ cells has $\beta$ compartments, and the $(k+1)^{th}$ cell has $\gamma$ compartments. The compartments have the
property that when an elements lands on a compartment, the compartment splits into $\alpha+1$ compartments. The number of ways of distributing
$n$ elements into $k+1$ cells is $(-1)^{n+k}\beta^kk!S(n,k,\alpha,-\beta,-\gamma)$.
\end{lemma}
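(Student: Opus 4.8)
The plan is to compute the required count, which I will call $W(n)$, by collapsing it to repeated applications of Property~\ref{property:1} via inclusion--exclusion, and then to identify the resulting alternating sum with $(-1)^{n+k}\beta^{k}k!\,S(n,k,\alpha,-\beta,-\gamma)$ through the defining relation of the Hsu--Shiue numbers. Throughout I read ``distributing $n$ elements into the $k+1$ cells'' as requiring each of the first $k$ cells to be non-empty, the $(k+1)$-th being allowed to be empty; this is the convention that produces a Stirling-type count.

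First I would note that, since $\alpha\mid\beta$ and $\alpha\mid\gamma$, the compartments of every cell obey the same ``split into $\alpha+1$'' rule, so placing all $n$ elements while forcing a prescribed set of $k-m$ of the first $k$ cells to stay empty is the same as distributing $n$ elements into a single cell with $m\beta+\gamma$ compartments, which by Property~\ref{property:1} can be done in $(m\beta+\gamma|-\alpha)_n$ ways. Running inclusion--exclusion over the $k$ events ``cell $i$ is empty'', $1\le i\le k$, then yields
\[
W(n)=\sum_{m=0}^{k}(-1)^{k-m}\binom{k}{m}(m\beta+\gamma|-\alpha)_n .
\]

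Next I would extract the closed form of that inner sum from the Hsu--Shiue definition. In $(t|\alpha)_n=\sum_{j}S(n,j,\alpha,\beta,\gamma)(t-\gamma|\beta)_j$ I would replace $(\beta,\gamma)$ by $(-\beta,-\gamma)$ and apply the elementary identity $(x|\delta)_r=(-1)^r(-x|-\delta)_r$ on both sides, obtaining
\[
(u|-\alpha)_n=\sum_{j=0}^{n}(-1)^{n+j}S(n,j,\alpha,-\beta,-\gamma)\,(u-\gamma|\beta)_j .
\]
Specialising $u=m\beta+\gamma$ turns $(u-\gamma|\beta)_j$ into $(m\beta|\beta)_j=\beta^{j}m^{\underline{j}}$; inserting this into the formula for $W(n)$, interchanging the two summations, and evaluating the finite difference $\sum_{m=0}^{k}(-1)^{k-m}\binom{k}{m}m^{\underline{j}}=k!\,[\,j=k\,]$ leaves exactly $W(n)=(-1)^{n+k}\beta^{k}k!\,S(n,k,\alpha,-\beta,-\gamma)$. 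An equivalent route is to compare exponential generating functions: Property~\ref{property:1} together with the generalised binomial series gives $\sum_{n\ge 0}(\delta|-\alpha)_n t^n/n!=(1-\alpha t)^{-\delta/\alpha}$, so a non-empty $\beta$-compartment cell has e.g.f.\ $(1-\alpha t)^{-\beta/\alpha}-1$ and hence $W(n)$ has e.g.f.\ $(1-\alpha t)^{-\gamma/\alpha}\bigl[(1-\alpha t)^{-\beta/\alpha}-1\bigr]^{k}$, which is precisely the e.g.f.\ of the sequence $(-1)^{n+k}\beta^{k}k!\,S(n,k,\alpha,-\beta,-\gamma)$.

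The factorial-polynomial manipulations and the finite-difference evaluation are routine. The step that needs care is the modelling claim that forcing a subset of cells to be empty and amalgamating the remaining cells into one cell of $m\beta+\gamma$ compartments is a genuine bijection --- this is exactly where the divisibility hypotheses $\alpha\mid\beta$ and $\alpha\mid\gamma$ are essential --- together with getting the signs in the inclusion--exclusion correct. I expect this bookkeeping, rather than any of the algebra, to be the main obstacle.
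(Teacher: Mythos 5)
Your proof is correct. Note that the paper itself offers no proof of this lemma --- it is quoted from the cited reference of Corcino, Hsu and Tan --- but your argument (inclusion--exclusion over the empty $\beta$-cells, reduction of each term to Property~\ref{property:1} via amalgamation into a single cell of $m\beta+\gamma$ compartments, then identification of $\sum_{m}(-1)^{k-m}\binom{k}{m}(m\beta+\gamma\mid-\alpha)_n$ with $(-1)^{n+k}\beta^k k!\,S(n,k,\alpha,-\beta,-\gamma)$) is exactly the standard derivation underlying the paper's own displayed identity \eqref{equation:1}, whose left-hand side should in fact read $S(n,k,\alpha,-\beta,-\gamma)$ as your computation confirms.
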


We generalise Lemma~\ref{lemma:2} in the following way without considering the cell with $\gamma$ compartments.
\begin{property}\label{propoerty:2}\cite{Paper with interpretation of G-stirling numbers}
\label{lemma:1}
Given $\alpha,\beta$ (non-negative integers) such that $\alpha|\beta$. Given $k$ distinct cells such that the $k$ cells each contains $\beta$
compartments. The compartments are given cyclic ordered numbering. Each compartment has unlimited capacity. After an element lands on a
compartment, the compartment splits into $\alpha+1$ compartments. The number of all those arrangements such that none of the compartments are
empty (where $k$ runs from 0 to $n$), and each of the $k$ cells is colored by one of $x$ available colors is,
$\sum\limits_{k=0}^{n}(-1)^{n+k}\beta^kk!S(n,k,\alpha,-\beta,0)x^k$.\\
 Denote, $M^{\lambda,x}_n(\alpha,\beta,0)=\sum\limits_{k=0}^{n}(-1)^{n+k}\beta^kk!S(n,k,\alpha,-\beta,0)x^k$.\end{property}

\begin{equation}\label{equation:1}(-1)^{n+k}\beta^kk!S(n,k,\alpha,\beta,\gamma)=\sum\limits_{i=0}^n(-1)^{k-i}\binom{k}{i}(\beta
i+\gamma|-\alpha)_n.\end{equation}

 The authors in \cite{A unified approach to generalized Stirling numbers} have proposed the following,
 \begin{equation}\label{equation:2}
 (1+\alpha t)^{\frac{\gamma}{\alpha}}\begin{bmatrix}
 \frac{(1+\alpha t)^{\frac{\beta}{\alpha}}-1}{\beta}
 \end{bmatrix}^k=k!\sum\limits_{n=0}^{\infty}S(n,k,\alpha,\beta,\gamma)\frac{t^n}{n!}.
 \end{equation}

Let $A^{\lambda,x}_n(\alpha,\beta,\gamma)=\sum\limits_{k=0}^{n}\binom{k+\lambda-1}{k}(-1)^{n+k}\beta^kk!S(n,k,\alpha,-\beta,-\gamma)x^k$.

Its follows from \eqref{equation:2} that,

\begin{equation}\label{equation:11}
(1-\alpha t)^{\frac{-\gamma}{\alpha}}\begin{bmatrix}
\frac{1}{1-x[(1-\alpha t)^{\frac{-\beta}{\alpha}}-1]}
\end{bmatrix}^\lambda=\sum\limits_{n=0}^{\infty}A^{\lambda,x}_n(\alpha,\beta,\gamma)\frac{t^n}{n!}.
\end{equation}
\begin{theorem} \label{theorem:1}For $\alpha,\beta,\gamma,x\in\mathbb{N}_0$ such that $(\alpha,\beta,\gamma,x)\not=(0,0,0,0)$,

 $A^{\lambda,x}_n(\alpha,\beta,\gamma)$
 is the number of barred preferential arrangements having $\lambda$ bars such that one section has property~\ref{property:1} and $\lambda$
 sections  having property~\ref{propoerty:2}.\end{theorem}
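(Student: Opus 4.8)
The plan is to prove the statement by exhibiting the exponential generating function (EGF) of the family of barred preferential arrangements described in the theorem, showing it equals the left-hand side of \eqref{equation:11}, and then reading off coefficients. First I would fix the decomposition of the structure: a barred preferential arrangement with $\lambda$ bars has $\lambda+1$ linearly ordered sections, and I would let a designated section (for definiteness, the last one, by analogy with the $(k+1)^{th}$ cell of Lemma~\ref{lemma:2}) carry the structure of Property~\ref{property:1}, while the remaining $\lambda$ sections carry the structure of Property~\ref{propoerty:2}. Since every element of $X_n$ lands in exactly one section, such a configuration is a labelled structure: partition $X_n$ into $\lambda+1$ (possibly empty) ordered blocks, endow the designated block with a Property~\ref{property:1}-arrangement and each of the other $\lambda$ blocks with a Property~\ref{propoerty:2}-arrangement. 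By the product rule for EGFs of labelled structures, its EGF is $g_1(t)\,g_2(t)^{\lambda}$, where $g_1$ and $g_2$ are the EGFs counting a single section of each type.

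Next I would compute $g_1$ and $g_2$. By Property~\ref{property:1}, a first-type section on an $m$-set admits $(\gamma\,|-\alpha)_m$ arrangements, so $g_1(t)=\sum_{m\ge0}(\gamma\,|-\alpha)_m\,t^m/m!=(1-\alpha t)^{-\gamma/\alpha}$ by the generalised binomial series. For the second type, a section is an ordered, $x$-coloured list of $k\ge0$ nonempty $\beta$-compartment cells; one such cell on an $m$-set has EGF $(1-\alpha t)^{-\beta/\alpha}-1$ (Property~\ref{property:1} with $\gamma$ replaced by $\beta$, discarding the $m=0$ term), so an ordered $x$-coloured list of them has EGF $\sum_{k\ge0}\big(x[(1-\alpha t)^{-\beta/\alpha}-1]\big)^k=\big(1-x[(1-\alpha t)^{-\beta/\alpha}-1]\big)^{-1}$. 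To see that this series equals $\sum_{m\ge0}M^{x}_m(\alpha,\beta,0)\,t^m/m!$, I would expand it as $\sum_k x^k[(1-\alpha t)^{-\beta/\alpha}-1]^k$ and apply \eqref{equation:2} with the substitutions $\beta\mapsto-\beta$, $\gamma\mapsto0$, $t\mapsto-t$: these turn $k!\,S(m,k,\alpha,\beta,\gamma)$ into exactly $(-1)^{m+k}\beta^k k!\,S(m,k,\alpha,-\beta,0)$, which is the summand in the definition of $M^{x}_m(\alpha,\beta,0)$ from Property~\ref{propoerty:2}. Hence $g_2(t)=\big(1-x[(1-\alpha t)^{-\beta/\alpha}-1]\big)^{-1}$.

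Combining the two, the EGF of the family is $(1-\alpha t)^{-\gamma/\alpha}\big(1-x[(1-\alpha t)^{-\beta/\alpha}-1]\big)^{-\lambda}$, which is precisely the left-hand side of \eqref{equation:11}; comparing the coefficient of $t^n/n!$ on both sides of \eqref{equation:11} then gives that the number of such barred preferential arrangements on $X_n$ is $A^{\lambda,x}_n(\alpha,\beta,\gamma)$, as claimed.

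I expect the main obstacle to be the bookkeeping in the middle paragraph: one must make sure the Property~\ref{property:1} section occupies a fixed position (so that $g_1$ enters the product to the first power rather than being summed over the $\lambda+1$ possible slots), and one must keep the signs $(-1)^{m+k}$ straight — the clean device being the substitution $t\mapsto-t$ in \eqref{equation:2}. It is also worth matching the expansion $(1-u)^{-\lambda}=\sum_k\binom{k+\lambda-1}{k}u^k$ against the defining sum of $A^{\lambda,x}_n(\alpha,\beta,\gamma)$, since the factor $\binom{k+\lambda-1}{k}$ is exactly the number of weak compositions of $k$ into $\lambda$ parts, i.e. the number of ways to distribute $k$ coloured cells among the $\lambda$ Property~\ref{propoerty:2}-sections. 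As a cross-check, I would note that the same count can be obtained bijectively without generating functions: by Lemma~\ref{lemma:2}, $(-1)^{n+k}\beta^k k!\,S(n,k,\alpha,-\beta,-\gamma)$ counts distributions of $X_n$ into $k$ ordered $\beta$-cells followed by one $\gamma$-cell; $\binom{k+\lambda-1}{k}$ then inserts $\lambda-1$ bars so as to split the $k$ ordered cells into $\lambda$ ordered sections, one further bar isolating the $\gamma$-cell as the last section (for $\lambda$ bars in total); and $x^k$ colours the $k$ cells. Summing over $k$ recovers $A^{\lambda,x}_n(\alpha,\beta,\gamma)$.
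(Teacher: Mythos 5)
Your proposal is correct and follows essentially the same route as the paper: the paper's proof consists precisely of reading off from \eqref{equation:11} the multinomial convolution $A^{\lambda,x}_n(\alpha,\beta,\gamma)=\sum_{n_1+\cdots+n_{\lambda+1}=n}\binom{n}{n_1,\ldots,n_{\lambda+1}}(\gamma|-\alpha)_{n_1}\prod_{k=2}^{\lambda+1}M^{x}_{n_k}(\alpha,\beta,0)$, i.e.\ exactly your product decomposition $g_1(t)\,g_2(t)^{\lambda}$ with each factor identified as the exponential generating function of one section type. Your write-up merely makes explicit the steps the paper leaves implicit (the identification of the coefficients of $g_2$ with $M^{x}_m(\alpha,\beta,0)$ via \eqref{equation:2} under $\beta\mapsto-\beta$, $t\mapsto-t$, and the alternative count via $\binom{k+\lambda-1}{k}$).
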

 \begin{proof}
 By \eqref{equation:11} we\fontsize{9}{1} have,\begin{equation}
 A^{\lambda,x}_n(\alpha,\beta,\gamma)=\sum\limits_{n_1+n_2+\cdots+
 n_{\lambda+1}=n}\binom{n}{n_1,n_2,\ldots,n_{\lambda+1}}(\gamma|-\alpha)_{n_1}\prod_{k=2}^{\lambda+1}M^{\lambda,x}_{n_k}(\alpha,\beta,0).
 \end{equation}
 \end{proof}


\begin{remark}[Nelsen-Schmidt generating Function]
Theorem~\ref{theorem:1} offers a generalised answer to the Nelsen-Schmidt question; as the generating function of Nelsen and Schmidt in
\eqref{equation:5} arises as a special case of the generating function, $(1-\alpha t)^{\frac{-\gamma}{\alpha}}\begin{bmatrix}
\frac{1}{1-x[(1-\alpha t)^{\frac{-\beta}{\alpha}}-1]}
\end{bmatrix}^\lambda$.
\end{remark}

Theorems \ref{theorem:6},\ref{theorem:2}, and \ref{theorem:4} are analogous to theorems 3.1, 3.7, 3.5 of \cite{A comb analysis of geo polynomials}.
\begin{theorem}\label{theorem:6}For $\alpha,\beta,\gamma,x\in\mathbb{N}_0$ such that $(\alpha,\beta,\gamma,x)\not=(0,0,0,0)$,
\begin{equation}
A^{\lambda,x}_{n+1}(\alpha,\beta,\gamma)=\gamma A^{\lambda,x}_n(\alpha,\beta,\gamma+\alpha)+x\lambda\beta
A^{\lambda+1,x}_n(\alpha,\beta,\gamma+\beta+\alpha).
\end{equation}
\end{theorem}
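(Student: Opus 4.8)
The plan is to prove the recurrence directly from the exponential generating function identity \eqref{equation:11} by differentiating with respect to $t$. Write
\[
F(t) = (1-\alpha t)^{\frac{-\gamma}{\alpha}}\left[\frac{1}{1-x\bigl((1-\alpha t)^{\frac{-\beta}{\alpha}}-1\bigr)}\right]^{\lambda}
= \sum_{n=0}^{\infty} A^{\lambda,x}_n(\alpha,\beta,\gamma)\frac{t^n}{n!}.
\]
Differentiating the left-hand side, the factor $(1-\alpha t)^{-\gamma/\alpha}$ contributes $\gamma\,(1-\alpha t)^{-1}$ times itself, and the bracketed factor raised to $\lambda$ contributes, via the chain rule, a term proportional to $\lambda x \beta\,(1-\alpha t)^{-1}\cdot(1-\alpha t)^{-\beta/\alpha}$ times the same bracket raised to $\lambda+1$. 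Concretely I expect to land on a functional equation of the shape
\[
F'(t) = \frac{\gamma}{1-\alpha t}\,G(t) + \frac{\lambda x\beta}{1-\alpha t}\,H(t),
\]
where $G$ is the generating function obtained from $F$ by replacing $\gamma$ with $\gamma+\alpha$ (absorbing the extra $(1-\alpha t)^{-1}$ into the exponent), and $H$ is the generating function with $\lambda$ replaced by $\lambda+1$ and $\gamma$ replaced by $\gamma+\beta+\alpha$ (absorbing both the leftover $(1-\alpha t)^{-1}$ and the $(1-\alpha t)^{-\beta/\alpha}$ into the exponent). That is, $G(t)=\sum_n A^{\lambda,x}_n(\alpha,\beta,\gamma+\alpha)\frac{t^n}{n!}$ and $H(t)=\sum_n A^{\lambda+1,x}_n(\alpha,\beta,\gamma+\beta+\alpha)\frac{t^n}{n!}$.

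The key steps, in order, are: (1) compute $F'(t)$ explicitly using the product and chain rules; (2) identify the two resulting pieces with $\frac{\gamma}{1-\alpha t}G(t)$ and $\frac{\lambda x\beta}{1-\alpha t}H(t)$ by matching exponents of $(1-\alpha t)$ and the power of the geometric bracket against \eqref{equation:11} with shifted parameters; (3) multiply through so that there is no $(1-\alpha t)^{-1}$ obstructing a clean coefficient comparison, or alternatively simply read off coefficients of $t^n/n!$ after noting that $(1-\alpha t)^{-1}G(t)$ has $t^n/n!$-coefficient equal to $A^{\lambda,x}_n(\alpha,\beta,\gamma+\alpha)$ directly, since raising the $\gamma$-exponent by $\alpha$ is exactly multiplication by $(1-\alpha t)^{-1}$; and (4) equate the coefficient of $t^n/n!$ on both sides, using that the coefficient of $t^n/n!$ in $F'(t)$ is $A^{\lambda,x}_{n+1}(\alpha,\beta,\gamma)$, to obtain the stated identity.

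The main obstacle I anticipate is purely bookkeeping rather than conceptual: one must verify carefully that absorbing the stray factors $(1-\alpha t)^{-1}$ and $(1-\alpha t)^{-\beta/\alpha}$ into the exponents of the defining generating function \eqref{equation:11} really does produce the polynomials with arguments $\gamma+\alpha$ and $\gamma+\beta+\alpha$ respectively — i.e., that the substitution $\gamma \mapsto \gamma+\alpha$ in $(1-\alpha t)^{-\gamma/\alpha}$ multiplies by exactly $(1-\alpha t)^{-1}$, and similarly that the chain-rule derivative of the bracket factor, which brings down $(1-\alpha t)^{-\beta/\alpha-1}$, combines with the existing $(1-\alpha t)^{-\gamma/\alpha}$ to give precisely $(1-\alpha t)^{-(\gamma+\beta+\alpha)/\alpha}$ while leaving the geometric bracket raised to $\lambda+1$. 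Once the exponent arithmetic is checked, the coefficient comparison is immediate. One should also remark that although the theorem is stated for $\alpha,\beta,\gamma,x\in\mathbb{N}_0$, the generating-function computation is an identity of formal power series valid for the parameters as indeterminates, so no convergence or positivity issue arises; the hypothesis $(\alpha,\beta,\gamma,x)\neq(0,0,0,0)$ is only needed so that the objects in \eqref{equation:11} are well defined.
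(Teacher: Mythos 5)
Your generating-function argument is correct, but it is a genuinely different route from the proof the paper gives for Theorem~\ref{theorem:6}, which is combinatorial: the authors argue by cases on the position of the $(n+1)$th element in the barred preferential arrangement (if it lands in the section with Property~\ref{property:1} there are $\gamma$ compartments to choose and the compartment count grows to $\gamma+\alpha$; if it lands in a Property~\ref{propoerty:2} section, the section, cell, compartment and colour give the factor $x\lambda\beta$ and the split-and-merge of the cell $P$ with the special section produces the parameters $(\lambda+1,\gamma+\beta+\alpha)$). Your differentiation of \eqref{equation:11} does appear in the paper, but only later, at the start of Section~3, where the authors compute
\[
\sum_{n=0}^{\infty}A_{n+1}^{\lambda,x}(\alpha,\beta,\gamma)\frac{t^{n}}{n!}
=\gamma\,\frac{(1-\alpha t)^{\frac{-(\gamma+\alpha)}{\alpha}}}{\left(1-x[(1-\alpha t)^{\frac{-\beta}{\alpha}}-1]\right)^{\lambda}}
+x\lambda\beta\,\frac{(1-\alpha t)^{\frac{-(\gamma+\alpha+\beta)}{\alpha}}}{\left(1-x[(1-\alpha t)^{\frac{-\beta}{\alpha}}-1]\right)^{\lambda+1}}
\]
and remark that this yields Theorem~\ref{theorem:6}; so your plan is essentially their Section~3 derivation promoted to a proof. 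Each approach buys something: the combinatorial proof explains the identity in terms of the objects counted in Theorem~\ref{theorem:1} and justifies the non-negativity hypotheses, while yours is shorter, is a formal power series identity valid for the parameters as indeterminates, and requires only the exponent arithmetic you flag. One small inconsistency to fix in your writeup: having defined $G(t)=\sum_n A^{\lambda,x}_n(\alpha,\beta,\gamma+\alpha)\frac{t^n}{n!}$ and $H(t)=\sum_n A^{\lambda+1,x}_n(\alpha,\beta,\gamma+\beta+\alpha)\frac{t^n}{n!}$ (which already absorb the stray powers of $(1-\alpha t)^{-1}$), the functional equation should read $F'(t)=\gamma\,G(t)+\lambda x\beta\,H(t)$ with no additional $(1-\alpha t)^{-1}$ prefactors; as written you count that factor twice, though the error washes out once the definitions are used consistently and the coefficient comparison then gives exactly the stated recurrence.
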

\begin{proof}
The proof is based on the position of the $(n+1)th$ element. There are two cases to consider.

When the $(n+1)th$ element is on the section with property~\ref{property:1}. There are $\gamma$ compartments to choose from in-order to place
the $(n+1)th$ elements. Placing the elements with create $\alpha$ more compartments. The remainder $n$ elements may be placed on the $\lambda+1$
sections in $A^{\lambda,x}_n(\alpha,\beta,\gamma+\alpha)$.

In the other case, the $(n+1)th$ element is in one of the sections with property~\ref{propoerty:2}. There are $\lambda$ ways of choosing a
section. Say the $(n+1)th$ element is on the $j^{th}$ section. Suppose the $(n+1)th$ element is in cell $P$ within the $j^{th}$ section. A
compartment to place the $(n+1)th$ element can be chosen in $\beta$ ways.  The cell $P$ can be colored in $x$ ways.  The portion of the $j^{th}$
cell to the left of $P$ excluding $P$, gives rise to a single section having property~\ref{propoerty:2}, also the portion of the $j^{th}$
section to the right of $P$ excluding $P$, gives rise to a single section having property~\ref{propoerty:2}. Also treat the single section
having property~\ref{property:1} and $P$ as a single unit having $\gamma+\beta+\alpha$ compartments. The remainder $n$ elements can be arranged
in $A^{\lambda+1,x}_n(\alpha,\beta,\gamma+\beta+\alpha)$.
\end{proof}

\begin{theorem}\label{theorem:2}For $\alpha,\beta,\gamma,x\in\mathbb{N}_0$ such that $(\alpha,\beta,\gamma,x)\not=(0,0,0,0)$,\fontsize{10}{1}
\begin{equation}
A^{\lambda,x}_{n+1}(\alpha,\beta,\gamma)=\gamma
A^{\lambda,x}_{n}(\alpha,\beta,\gamma+\alpha)+\sum\limits_{k=0}^{n}\binom{n}{k}A^{0,x}_k(\alpha,\beta,\gamma)A_{n-k+1}^{\lambda,x}(\alpha,\beta,0).
\end{equation}

\end{theorem}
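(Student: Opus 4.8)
The plan is to prove the identity at the level of exponential generating functions, since by \eqref{equation:11} the sequence $A^{\lambda,x}_n(\alpha,\beta,\gamma)$ is determined by
$F_{\gamma}(t):=(1-\alpha t)^{-\gamma/\alpha}\bigl[\tfrac{1}{1-x[(1-\alpha t)^{-\beta/\alpha}-1]}\bigr]^{\lambda}$, and the right-hand side is a sum of a $\gamma$-shift term and a convolution, both of which have transparent generating-function meaning. First I would differentiate $F_\gamma(t)$ with respect to $t$; the factor $(1-\alpha t)^{-\gamma/\alpha}$ contributes $\gamma(1-\alpha t)^{-(\gamma+\alpha)/\alpha}$ times the bracketed factor, and since the bracketed factor is exactly the generating function of $A^{\lambda,x}_n(\alpha,\beta,0)$ shifted appropriately, this first piece is recognised as the EGF of $\gamma A^{\lambda,x}_n(\alpha,\beta,\gamma+\alpha)$ — this matches the first term on the right. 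The remaining piece of $F_\gamma'(t)$ comes from differentiating the bracket; I would massage it so that it factors as $(1-\alpha t)^{-\gamma/\alpha}$ (the EGF-factor for the ``$\gamma$-cell'', i.e.\ for $A^{0,x}_k(\alpha,\beta,\gamma)$ up to the $\lambda=0$ bracket being trivial) times the derivative of $[\tfrac{1}{1-x[(1-\alpha t)^{-\beta/\alpha}-1]}]^{\lambda}$, and the latter derivative is the EGF of $A^{\lambda,x}_{n+1}(\alpha,\beta,0)$. Multiplying two EGFs corresponds to the binomial convolution $\sum_k \binom{n}{k}A^{0,x}_k(\alpha,\beta,\gamma)A^{\lambda,x}_{n-k+1}(\alpha,\beta,0)$, which is precisely the second term.

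Concretely, the key steps in order are: (1) write $F_\gamma(t)=G(t)\,H(t)^\lambda$ with $G(t)=(1-\alpha t)^{-\gamma/\alpha}$ and $H(t)=\bigl(1-x[(1-\alpha t)^{-\beta/\alpha}-1]\bigr)^{-1}$; (2) compute $F_\gamma'(t)=G'(t)H(t)^\lambda+G(t)\cdot\lambda H(t)^{\lambda-1}H'(t)$ and verify $\tfrac{d}{dt}[t^n]\leftrightarrow A^{\lambda,x}_{n+1}$; (3) identify $G'(t)H(t)^\lambda = \gamma\,(1-\alpha t)^{-(\gamma+\alpha)/\alpha}H(t)^\lambda$ as the EGF of $\gamma A^{\lambda,x}_n(\alpha,\beta,\gamma+\alpha)$ directly from \eqref{equation:11} with $\gamma$ replaced by $\gamma+\alpha$; (4) observe that $H(t)^{\lambda-1}H'(t)$ is, up to the constant, $\tfrac1\lambda\tfrac{d}{dt}H(t)^\lambda$, so that $G(t)\cdot\lambda H(t)^{\lambda-1}H'(t)=G(t)\cdot\tfrac{d}{dt}H(t)^\lambda$; recognising $G(t)$ as the EGF of $A^{0,x}_k(\alpha,\beta,\gamma)$ (the $\lambda=0$ case of \eqref{equation:11}) and $\tfrac{d}{dt}H(t)^\lambda$ as the EGF of $A^{\lambda,x}_{n+1}(\alpha,\beta,0)$, the product of EGFs yields the binomial convolution; (5) extract the coefficient of $t^{n}/n!$ on both sides.

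Alternatively — and this is probably cleaner for a combinatorics paper — one can give a direct bijective argument paralleling the proof of Theorem~\ref{theorem:6}: classify a barred preferential arrangement of $X_{n+1}$ (of the type counted by $A^{\lambda,x}_{n+1}(\alpha,\beta,\gamma)$) according to whether the first element (or any fixed marked element) lies in the section with property~\ref{property:1} or in one of the $\lambda$ sections with property~\ref{propoerty:2}; the first case gives $\gamma A^{\lambda,x}_n(\alpha,\beta,\gamma+\alpha)$ exactly as before, while in the second case one cuts the arrangement at the section containing the marked element, producing a $\lambda$-barred arrangement with a distinguished nonempty section (contributing $A^{\lambda,x}_{n-k+1}(\alpha,\beta,0)$ on the $n-k+1$ elements weakly after the cut) and a residual arrangement on the remaining $k$ elements living in the property~\ref{property:1} section (contributing $A^{0,x}_k(\alpha,\beta,\gamma)$), with $\binom{n}{k}$ choices for the split of labels.

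The main obstacle I anticipate is step (4): getting the bookkeeping of the bracket's derivative exactly right so that $G(t)\cdot\tfrac{d}{dt}H(t)^\lambda$ has $H(t)$ evaluated with the \emph{original} $\gamma=0$ (hence $A^{\lambda,x}_{n-k+1}(\alpha,\beta,0)$) rather than a shifted parameter, and confirming that no stray $(1-\alpha t)^{-\beta/\alpha}$ factor needs to be absorbed — equivalently, in the bijective version, checking that treating ``the property~\ref{property:1} section plus the pre-cut part'' does not double-count compartments the way it did in Theorem~\ref{theorem:6}. Once the factorisation $F_\gamma'=G'H^\lambda+G\cdot\tfrac{d}{dt}H^\lambda$ is in hand, the rest is routine coefficient extraction and an appeal to \eqref{equation:11}.
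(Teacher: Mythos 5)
Your proposal is correct, but your primary route is genuinely different from the paper's. The paper proves Theorem~\ref{theorem:2} purely combinatorially, by conditioning on the position of the $(n+1)$th element: if it sits in the section with property~\ref{property:1} one gets the term $\gamma A^{\lambda,x}_n(\alpha,\beta,\gamma+\alpha)$ as in Theorem~\ref{theorem:6}; otherwise one chooses the $k$ elements destined for the property~\ref{property:1} section (arranged in $(\gamma|-\alpha)_k$ ways, which the paper writes as $A^{\lambda,x}_k(\alpha,0,\gamma)$ and the statement as $A^{0,x}_k(\alpha,\beta,\gamma)$ --- both reduce to $(\gamma|-\alpha)_k$) and places the remaining $n-k$ elements together with the $(n+1)$th into the $\lambda$ sections in $A^{\lambda,x}_{n-k+1}(\alpha,\beta,0)$ ways. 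Your generating-function argument is a clean analytic counterpart: writing $F_\gamma=G\cdot H^\lambda$ with $G=(1-\alpha t)^{-\gamma/\alpha}$ and $H=(1-x[(1-\alpha t)^{-\beta/\alpha}-1])^{-1}$, the product rule gives $F_\gamma'=G'H^\lambda+G\cdot\frac{d}{dt}(H^\lambda)$, the first summand is $\gamma$ times the EGF of $A^{\lambda,x}_n(\alpha,\beta,\gamma+\alpha)$, and the second is the product of the EGF of $A^{0,x}_k(\alpha,\beta,\gamma)$ with the EGF of the shifted sequence $A^{\lambda,x}_{n+1}(\alpha,\beta,0)$, yielding the binomial convolution. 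The obstacle you flag at step (4) does not materialise: since $H^\lambda$ is exactly the $\gamma=0$ specialisation of \eqref{equation:11}, its $t$-derivative is the EGF of $A^{\lambda,x}_{n+1}(\alpha,\beta,0)$ with no stray $(1-\alpha t)^{-\beta/\alpha}$ factor to absorb --- that complication only arises when one expands $\frac{d}{dt}(H^\lambda)=\lambda H^{\lambda+1}\cdot x\beta(1-\alpha t)^{-(\beta+\alpha)/\alpha}$ explicitly, which is the route taken for Theorem~\ref{theorem:4}, not here. Your alternative bijective sketch is essentially the paper's proof (the ``cut at the section containing the marked element'' phrasing is unnecessary; the split is simply by which elements occupy the property~\ref{property:1} section), and the generating-function version has the advantage of also certifying the identity for non-integer parameters, where the combinatorial interpretation is unavailable.
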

\begin{proof}
The proof of this theorem is also based on the position of the $(n+1)th$ element.

As before when the $(n+1)th$ element is in the section having $\gamma$ compartments the elements can be arranged in $\gamma
A^{\lambda,x}_n(\alpha,\beta,\gamma+\alpha)$ ways.

In the other case we choose $k$ elements which are to go into the section having property~\ref{property:1}. The $k$ elements can be arranged in
$A^{\lambda,x}_k(\alpha,0,\gamma)$. The remainder elements can be arranged on the $\lambda$ other sections together with the $(n+1)th$ element
in $A^{\lambda,x}_{n-k+1}(\alpha,\beta,0)$ ways.
\end{proof}
\begin{theorem}\label{theorem:4}For $\alpha,\beta,\gamma,x\in\mathbb{N}_0$ such that $(\alpha,\beta,\gamma,x)\not=(0,0,0,0)$,\fontsize{9.5}{1}
\begin{equation}\label{equation:3}
A^{\lambda,x}_{n+1}(\alpha,\beta,\gamma)=\gamma
A^{\lambda,x}_n(\alpha,\beta,\gamma+\alpha)+x\lambda\beta\sum\limits_{k=0}^{n}\binom{n}{k}A^{1,x}_{k}(\alpha,\beta,\gamma+\beta+\alpha)A^{\lambda,x}_{n-k}(\alpha,\beta,0).
\end{equation}
\end{theorem}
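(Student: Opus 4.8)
The plan is to follow the template of Theorem~\ref{theorem:6}: count the barred preferential arrangements enumerated by $A^{\lambda,x}_{n+1}(\alpha,\beta,\gamma)$ by examining where the $(n+1)$th element sits, but now extracting a convolution instead of a single higher-order term. If that element is placed in the section with property~\ref{property:1}, then exactly as in Theorem~\ref{theorem:6} there are $\gamma$ compartments available to receive it and the placement splits one compartment into $\alpha+1$, so the remaining $n$ elements are distributed over the resulting configuration (one property-\ref{property:1} section with $\gamma+\alpha$ compartments and $\lambda$ sections with property~\ref{propoerty:2}) in $A^{\lambda,x}_n(\alpha,\beta,\gamma+\alpha)$ ways. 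This produces the first summand $\gamma A^{\lambda,x}_n(\alpha,\beta,\gamma+\alpha)$.

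For the remaining case the $(n+1)$th element lies in one of the $\lambda$ property-\ref{propoerty:2} sections, say the $j$th ($\lambda$ choices), inside a cell $P$; the compartment receiving it can be chosen in $\beta$ ways, $P$ can be coloured in $x$ ways, and the placement splits $P$'s compartment, creating $\alpha$ more. I would then cut the $j$th section at $P$ into its left part $L$ and its right part $R$ (each a legitimate property-\ref{propoerty:2} section, possibly with no cells), and fuse $P$ with the property-\ref{property:1} section into one property-\ref{property:1} section with $\gamma+\beta+\alpha$ compartments. Now split the $n$ unused elements: choose the $k$ of them ($\binom{n}{k}$ ways) that go to the pair ``fused property-\ref{property:1} section together with $L$'' --- a one-bar arrangement of the kind counted by $A^{1,x}_k(\alpha,\beta,\gamma+\beta+\alpha)$ --- while the other $n-k$ go to $R$ together with the $\lambda-1$ untouched property-\ref{propoerty:2} sections, i.e.\ to $\lambda$ property-\ref{propoerty:2} sections and an (empty) property-\ref{property:1} section, which is an arrangement counted by $A^{\lambda,x}_{n-k}(\alpha,\beta,0)$. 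Summing over $k$ and collecting the $\lambda$, $\beta$ and $x$ factors gives the second summand, and hence \eqref{equation:3}.

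The step I expect to be the main obstacle is verifying that this decomposition is an honest bijection: one must check that the ``cut at $P$ and fuse with the property-\ref{property:1} section'' operation is reversible and that each arrangement on the left-hand side is recovered exactly once --- in particular that empty $L$ or $R$ create no ambiguity (this uses that a property-\ref{propoerty:2} section may have zero cells), that the ordering of sections is tracked consistently through the cut, and that $A^{\lambda,x}_{n-k}(\alpha,\beta,0)$ genuinely encodes $\lambda$ property-\ref{propoerty:2} sections because the factor $(0|-\alpha)_{n_1}$ appearing in its expansion forces $n_1=0$.

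As a cross-check, and in fact the most direct derivation, I would differentiate \eqref{equation:11} in $t$. Writing its right-hand side as $h\,g^{\lambda}$ with $h=(1-\alpha t)^{-\gamma/\alpha}$ and $g=\bigl(1-x[(1-\alpha t)^{-\beta/\alpha}-1]\bigr)^{-1}$, one has $h'=\gamma(1-\alpha t)^{-(\gamma+\alpha)/\alpha}$ and $g'=x\beta(1-\alpha t)^{-(\beta+\alpha)/\alpha}g^{2}$, so
\[
\frac{d}{dt}\bigl(h g^{\lambda}\bigr)=\gamma\,(1-\alpha t)^{-(\gamma+\alpha)/\alpha}g^{\lambda}+\lambda x\beta\,\Bigl[(1-\alpha t)^{-(\gamma+\beta+\alpha)/\alpha}g\Bigr]\,g^{\lambda}.
\]
Recognising $(1-\alpha t)^{-(\gamma+\beta+\alpha)/\alpha}g$ as the generating function of $A^{1,x}_k(\alpha,\beta,\gamma+\beta+\alpha)$ and $g^{\lambda}$ as that of $A^{\lambda,x}_m(\alpha,\beta,0)$, the Cauchy product for exponential generating functions together with comparison of the coefficient of $t^n/n!$ on both sides yields \eqref{equation:3}, confirming the combinatorial count.
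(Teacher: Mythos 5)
Your argument is correct and is essentially the paper's own proof: the authors likewise condition on the position of the $(n+1)$th element, charge $x\lambda\beta$ for choosing its section, cell, compartment and colour, fuse the cell $P$ with the $\gamma$-compartment section into a unit with $\gamma+\beta+\alpha$ compartments, and split the remaining $n$ elements by a binomial choice between that unit paired with one of the two half-sections created by cutting at $P$ (the paper takes the right half where you take the left, an immaterial mirror image) and the rest, yielding the factors $A^{1,x}_{k}(\alpha,\beta,\gamma+\beta+\alpha)$ and $A^{\lambda,x}_{n-k}(\alpha,\beta,0)$. Your generating-function cross-check via differentiating \eqref{equation:11} also coincides with the identity the paper records at the start of Section 3, so nothing further is needed.
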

\begin{proof}
The case when the $(n+1)th$ element is on the section having property~\ref{property:1} is as before.

Now suppose the $(n+1)th$ element is in one of the sections having property~\ref{propoerty:2}. The section, cell, compartment, and color of the
cell for the $(n+1)th$  element can all be chosen in $x\lambda\beta$ ways. Say the $(n+1)th$ element is on the $i^{th}$ section between the bars
$|^1$ and $|^2$ section. Denote the cell that the $(n+1)th$ element is on by $P$. The cell $P$ together with the single cell on the section
having property~\ref{property:1} can be thought of as a single unit having $\gamma+\beta+\alpha$ compartments, name it $M$. The portion of the
$i^{th}$ section from $|^1$ to $P$ excluding $P$ gives rise to a section having property~\ref{propoerty:2}.  Similarly the portion of the
$i^{th}$ section from $P$ (excluding $P$) to $|^2$ gives rise to a section having property~\ref{propoerty:2}, denote this second section by $Q$.
Off the $n$ elements (excluding the $(n+1)th$ element), we choose $k$ of them. The $k$ elements can occupy $M\cup Q$ in
$A^{1,x}_n(\alpha,\beta,\gamma+\beta+\alpha)$ ways. The remaining $n-k$ elements can be arranged on the other sections in
$A^{\lambda,x}_n(\alpha,\beta,0)$ ways.
\end{proof}
\begin{theorem}\label{theorem:5}For $\alpha,\beta,\gamma,x\in\mathbb{N}_0$ such that $(\alpha,\beta,\gamma,x)\not=(0,0,0,0)$,
\fontsize{11}{1}
\begin{equation}\label{equation:9}
 A^{\lambda,x}_n(\alpha,\beta,\gamma)=\sum\limits_{k=0}^{n}\binom{k+\lambda-1}{k}(-1)^{n+k}\beta^kk!S(n,k,\alpha,-\beta,-\gamma)x^k.
\end{equation}
\end{theorem}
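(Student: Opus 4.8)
The plan is to prove \eqref{equation:9} by reading off the closed form from the combinatorial interpretation established in Theorem~\ref{theorem:1}, re-counting the very same barred preferential arrangements according to the total number of cells occurring in the $\lambda$ sections that carry property~\ref{propoerty:2}. Thus $A^{\lambda,x}_{n}(\alpha,\beta,\gamma)$ is taken to be the BPA count of Theorem~\ref{theorem:1}, and the claim is that this count has the stated Stirling expansion.

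First I would fix $k$ with $0\le k\le n$ and restrict to those arrangements in which the $\lambda$ property-\ref{propoerty:2} sections contain exactly $k$ cells (blocks) in total. Because the cells are linearly ordered — first by the order of the $\lambda$ sections, then within each section — prescribing how these $k$ ordered cells are distributed among the $\lambda$ sections is the same as choosing a composition $k_{1}+\cdots+k_{\lambda}=k$ with every $k_{j}\ge 0$, of which there are $\binom{k+\lambda-1}{\lambda-1}=\binom{k+\lambda-1}{k}$; this produces the binomial factor. Colouring each of the $k$ cells with one of the $x$ available colours then contributes the factor $x^{k}$.

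With the sectioning and colouring fixed, it remains to place the $n$ elements. The receptacles are: the single cell of the property-\ref{property:1} section, with $\gamma$ compartments, together with the $k$ cells of the property-\ref{propoerty:2} sections, labelled $1,\dots,k$ by their global order, each with $\beta$ compartments, all non-empty, and all splitting into $\alpha+1$ compartments upon receiving an element. This is exactly the configuration of Lemma~\ref{lemma:2} with its ``$k+1$ cells'', so the number of such placements is $(-1)^{n+k}\beta^{k}k!\,S(n,k,\alpha,-\beta,-\gamma)$. Multiplying the three independent contributions and summing over $k$ from $0$ to $n$ gives \eqref{equation:9}.

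The main obstacle, and the step to treat carefully, is the bookkeeping that separates the $\binom{k+\lambda-1}{k}$ factor (distribution of the already ordered cells among sections) from the $k!$ buried in Lemma~\ref{lemma:2}: one must check that the $k!$ there records only the labelling of the $k$ compartmentalised cells by their global position — equivalently, that $k!S(n,k,\alpha,-\beta,-\gamma)$ counts ordered occupancy — so that no cell ordering is double counted, and that the non-emptiness convention implicit in Lemma~\ref{lemma:2} (property-\ref{propoerty:2} cells non-empty, the property-\ref{property:1} cell possibly empty) is the one used in the conditioning; the case $n=0$, where both sides equal $1$, is a convenient sanity check. Alternatively the identity follows analytically from \eqref{equation:11}: expand the $\lambda$-th power as the negative binomial series $\bigl(1-x u\bigr)^{-\lambda}=\sum_{k}\binom{k+\lambda-1}{k}x^{k}u^{k}$ with $u=(1-\alpha t)^{-\beta/\alpha}-1$, and then use \eqref{equation:2} under the substitutions $\beta\mapsto-\beta$, $\gamma\mapsto-\gamma$, $t\mapsto-t$ to recognise the coefficient of $t^{n}/n!$ in $(1-\alpha t)^{-\gamma/\alpha}\bigl[(1-\alpha t)^{-\beta/\alpha}-1\bigr]^{k}$ as $(-1)^{n+k}\beta^{k}k!\,S(n,k,\alpha,-\beta,-\gamma)$; here the delicate part is tracking the signs generated by $t\mapsto-t$ and $\beta\mapsto-\beta$.
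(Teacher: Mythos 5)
Your argument is correct and is essentially the paper's own proof: the paper likewise conditions on the number $k$ of $\beta$-compartment cells, invokes Lemma~\ref{lemma:2} for the factor $(-1)^{n+k}\beta^k k!\,S(n,k,\alpha,-\beta,-\gamma)$, colours the cells in $x^k$ ways, and obtains $\binom{k+\lambda-1}{k}$ by inserting $\lambda-1$ bars between the $k$ cells, which is the stars-and-bars equivalent of your composition count. Your closing generating-function derivation via \eqref{equation:11} and \eqref{equation:2} is a valid alternative but not needed.
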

\begin{proof}
 Given $n$ elements can be partitioned into $k+1$ cells, where $k$ have $\beta$ compartments, and one of the cells has $\gamma$ compartment in,
 $(-1)^{n+k}\beta^kk!S(n,k,\alpha,-\beta,-\gamma)$ ways, where on both types of compartments when an element lands on the compartment, the
 compartment  splits into $\alpha+1$ compartments. The $k$ cells having $\beta$ compartments can be colored in $x^k$ ways. Now, $\lambda-1$ bars
 can be inserted in-between the cells having $\beta$ compartments in $\binom{k+\lambda-1}{k}$.  \end{proof}

Thereom~\ref{theorem:5} is a generalisation of Theorem 3 of \cite{barred:2013}.

\begin{theorem}For $\alpha,\beta,\gamma,x\in\mathbb{N}_0$ such that $(\alpha,\beta,\gamma,x)\not=(0,0,0,0)$,
\begin{equation}
\label{equation:6}
A^{\lambda,x}_n(\alpha,\beta,0)=\sum\limits_{k=0}^{n}\binom{n}{k}A^{\lambda,x}_{n-k}(\alpha,\beta,\gamma)A^{0,x}_{k}(\alpha,0,\gamma)(-1)^{k}.
\end{equation}
\end{theorem}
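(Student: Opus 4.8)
The plan is to establish \eqref{equation:6} by comparing exponential generating functions via \eqref{equation:11}. Put $g(t)=\left[\dfrac{1}{1-x\left((1-\alpha t)^{-\beta/\alpha}-1\right)}\right]^{\lambda}$. Then \eqref{equation:11} gives $g(t)=\sum_{n\geq 0}A^{\lambda,x}_n(\alpha,\beta,0)\dfrac{t^n}{n!}$ and $(1-\alpha t)^{-\gamma/\alpha}g(t)=\sum_{n\geq 0}A^{\lambda,x}_n(\alpha,\beta,\gamma)\dfrac{t^n}{n!}$, while specialising \eqref{equation:11} to $\lambda=0$, $\beta=0$ (equivalently, Property~\ref{property:1}) gives $\sum_{k\geq 0}A^{0,x}_k(\alpha,0,\gamma)\dfrac{t^k}{k!}=(1-\alpha t)^{-\gamma/\alpha}$, and hence $\sum_{k\geq 0}(-1)^kA^{0,x}_k(\alpha,0,\gamma)\dfrac{t^k}{k!}=(1+\alpha t)^{-\gamma/\alpha}$. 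Since the binomial convolution $\sum_k\binom{n}{k}a_{n-k}b_k$ corresponds to the product of the exponential generating functions of $\{a_n\}$ and $\{b_n\}$, the right-hand side of \eqref{equation:6} has exponential generating function $(1-\alpha t)^{-\gamma/\alpha}g(t)$ times that of $\{(-1)^kA^{0,x}_k(\alpha,0,\gamma)\}_k$. Thus \eqref{equation:6} reduces to the assertion that this product equals $g(t)$, i.e.\ that $\{(-1)^kA^{0,x}_k(\alpha,0,\gamma)\}_k$ is the binomial-convolution inverse of the $\gamma$-shift sequence attached to $(1-\alpha t)^{-\gamma/\alpha}$; equivalently, that $\sum_{k=0}^{m}\binom{m}{k}(-1)^kA^{0,x}_k(\alpha,0,\gamma)\,(\gamma|-\alpha)_{m-k}$ vanishes for every $m\geq 1$.

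Carried out in order, the steps are: (i) record the three generating functions above from \eqref{equation:11}; (ii) identify $A^{0,x}_k(\alpha,0,\gamma)$ with the generalised falling factorial $(\gamma|-\alpha)_k$ of Property~\ref{property:1} and write its $(-1)^k$-twisted generating function in closed form; (iii) multiply the generating function of $A^{\lambda,x}_n(\alpha,\beta,\gamma)$ by the twisted generating function of $A^{0,x}_k(\alpha,0,\gamma)$ and check that the $\gamma$-dependent factors cancel, leaving $g(t)$; (iv) read off the coefficient of $t^n/n!$ and match it with $A^{\lambda,x}_n(\alpha,\beta,0)$ as supplied by \eqref{equation:11}. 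A parallel combinatorial proof is also available, in the style of Theorems~\ref{theorem:6}--\ref{theorem:4}: interpret the right-hand side of \eqref{equation:6} as a signed enumeration of pairs consisting of a $k$-element subset $S\subseteq X_n$ distributed over an auxiliary cell with $\gamma$ compartments (carrying the sign $(-1)^{|S|}$) together with an $A^{\lambda,x}_{n-k}(\alpha,\beta,\gamma)$-arrangement of $X_n\setminus S$, and construct a sign-reversing involution that transfers the least element lying on a $\gamma$-compartment between the auxiliary cell and the section having Property~\ref{property:1}. The only surviving configurations are those in which no element occupies any $\gamma$-compartment — that is, the auxiliary cell is empty and so is the Property~\ref{property:1} section — and these are exactly the barred preferential arrangements counted by $A^{\lambda,x}_n(\alpha,\beta,0)$.

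The step I expect to be the main obstacle is step (iii): the sign and argument bookkeeping. One must keep the Hsu--Shiue conventions of \eqref{equation:1} and \eqref{equation:2} perfectly aligned when $S(n,k,\alpha,\beta,\gamma)$ is evaluated at negated arguments, because the cancellation of the $\gamma$-dependent factors is a matter of precise exponents: it is the interplay of the factor $(-1)^{n+k}$ in the definition of $A^{\lambda,x}_n(\alpha,\beta,\gamma)$ with the factor $(-1)^k$ in \eqref{equation:6} that has to reproduce the convolution inverse of $\{(\gamma|-\alpha)_n\}$, and this should be verified with care, since the inverse of $\{(\gamma|-\alpha)_n\}$ under binomial convolution is the sequence with exponential generating function $(1-\alpha t)^{\gamma/\alpha}$. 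In the combinatorial route the corresponding delicate point is checking that the proposed involution is well defined and that it carries the untouched sections having Property~\ref{propoerty:2}, together with their cell-colourings and split-compartment structure, along unchanged, so that the matching is genuinely weight-preserving.
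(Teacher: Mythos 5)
Your generating-function plan is the right framework, but step (iii) fails, and pushing your own computations one line further shows that \eqref{equation:6} as printed cannot be proved: it is false whenever $\alpha\gamma\neq 0$. You correctly record from \eqref{equation:11} that $\sum_{k\geq 0}A^{0,x}_k(\alpha,0,\gamma)\,t^k/k!=(1-\alpha t)^{-\gamma/\alpha}$, hence that the $(-1)^k$-twisted sequence has exponential generating function $(1+\alpha t)^{-\gamma/\alpha}$; and you also correctly observe that the binomial-convolution inverse of $\{(\gamma|-\alpha)_n\}$ must have exponential generating function $(1-\alpha t)^{+\gamma/\alpha}$. These two facts are incompatible, since $(1+\alpha t)^{-\gamma/\alpha}\neq(1-\alpha t)^{\gamma/\alpha}$ unless $\alpha\gamma=0$. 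The generating function of the right-hand side of \eqref{equation:6} is therefore $(1-\alpha t)^{-\gamma/\alpha}(1+\alpha t)^{-\gamma/\alpha}g(t)=(1-\alpha^2t^2)^{-\gamma/\alpha}g(t)$, and the $\gamma$-dependent factors do not cancel; at order $t^2$ the discrepancy is exactly $2\alpha\gamma$. A concrete counterexample satisfying the stated hypotheses: take $\alpha=\gamma=1$, $\beta=0$, $n=2$, so that $A^{\lambda,x}_m(1,0,c)=(c|-1)_m$; the left side of \eqref{equation:6} is $(0|-1)_2=0$, while the right side is $2!-2\cdot 1!\cdot 1!+0!\cdot 2!=2$. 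The identity becomes true, and your step (iii) then goes through verbatim, if $A^{0,x}_k(\alpha,0,\gamma)=(\gamma|-\alpha)_k$ is replaced by $(\gamma|\alpha)_k=A^{0,x}_k(-\alpha,0,\gamma)$, i.e.\ the sign of $\alpha$ in the last factor must be flipped.

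The same defect sinks your proposed sign-reversing involution, for a structural reason you should make explicit: because a compartment splits into $\alpha+1$ compartments when an element lands on it, transferring an element between the auxiliary $\gamma$-cell and the Property~\ref{property:1} section changes the number of compartments available to the elements placed after it, so the transfer is not weight-preserving. If such an involution existed, the $\beta=0$ specialisation would force $\sum_{k}\binom{m}{k}(-1)^k(\gamma|-\alpha)_k(\gamma|-\alpha)_{m-k}=0$ for all $m\geq 1$, which already fails at $m=2$, where the sum equals $2\alpha\gamma$. For comparison, the paper's own proof is a one-sentence inclusion--exclusion argument that glosses over exactly this point, so the obstruction you flagged as ``the main obstacle'' is genuine and not merely a bookkeeping issue; the honest conclusions are the corrected identity above, or the stated one restricted to $\alpha=0$ or $\gamma=0$.
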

\begin{proof}
We can have at least $k$ elements in the special  section, that has $\gamma$ compartments in
$\binom{n}{k}A^{0,x}_k(\alpha,0,\gamma)A^{\lambda,x}_{n-k}(\alpha,\beta,\gamma)$ ways. The inclusion/exclusion principle completes the proof.
\end{proof}

Combining \eqref{equation:9} and the following equation from \cite{Corcino Rorberto 2001},
\begin{equation}\label{equation:8}
S(n,k,\alpha,\gamma,\beta)=\sum_{s=k}^{n}\binom{n}{s}(\gamma|\alpha)_{n-s}S(i,k,\alpha,\beta,\gamma),
\end{equation}
we obtain \eqref{equation:7} below. In the following theorem we give a combinatorial interpretation of the result.

\begin{theorem}For $\alpha,\beta,\gamma,x\in\mathbb{N}_0$ such that $(\alpha,\beta,\gamma,x)\not=(0,0,0,0)$,
\fontsize{9}{1}
\begin{equation}\label{equation:7}
A^{\lambda,x}_n(\alpha,\beta,\gamma)=\sum\limits_{k=0}^{n}\binom{k+\lambda-1}{k}\sum\limits_{i=0}^{n}\binom{n}{i}x^k(-1)^{k+i}\beta^kk!S(i,k,\alpha,-\beta,0)(\gamma|-\alpha)_{n-i}.
\end{equation}
\end{theorem}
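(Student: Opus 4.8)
The statement is the combinatorial reading of the algebraic identity recorded immediately above it, which comes from feeding the $\gamma$-shift relation \eqref{equation:8} into the closed form \eqref{equation:9} of Theorem~\ref{theorem:5} (with the parameter replacements $\beta\mapsto-\beta$, $\gamma\mapsto-\gamma$ and the conversion $(-\gamma|\alpha)_m=(-1)^m(\gamma|-\alpha)_m$). So the plan is to prove \eqref{equation:7} by exhibiting its right-hand side as a count of the barred preferential arrangements of Theorem~\ref{theorem:1}: those of $X_n$ with $\lambda$ bars, one section having Property~\ref{property:1} (the cell with $\gamma$ compartments) and $\lambda$ sections having Property~\ref{propoerty:2}.

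First I would sort such an arrangement by which elements fall in the distinguished Property~\ref{property:1} section. If that section receives $n-i$ of the $n$ elements, these are selected in $\binom{n}{i}$ ways and, by Property~\ref{property:1}, spread over its $\gamma$ self-splitting compartments in $(\gamma|-\alpha)_{n-i}$ ways. The remaining $i$ elements must occupy only the $\lambda$ Property~\ref{propoerty:2} sections. The point I would stress is that the number of ways of doing this is exactly $A^{\lambda,x}_i(\alpha,\beta,0)$: putting $\gamma=0$ in Theorem~\ref{theorem:1} --- equivalently in \eqref{equation:11}, where the prefactor becomes $(1-\alpha t)^0=1$ --- forces the Property~\ref{property:1} section to be empty, so $A^{\lambda,x}_i(\alpha,\beta,0)$ counts precisely the ways of distributing $i$ elements among $\lambda$ ordered Property~\ref{propoerty:2} sections. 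Since the chosen partition of the elements decouples the two parts, this gives the intermediate identity
\[
A^{\lambda,x}_n(\alpha,\beta,\gamma)=\sum_{i=0}^{n}\binom{n}{i}(\gamma|-\alpha)_{n-i}\,A^{\lambda,x}_i(\alpha,\beta,0).
\]

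To conclude, I would substitute into this the closed form \eqref{equation:9} of Theorem~\ref{theorem:5} with $n\mapsto i$ and $\gamma\mapsto 0$, namely $A^{\lambda,x}_i(\alpha,\beta,0)=\sum_{k=0}^{i}\binom{k+\lambda-1}{k}(-1)^{i+k}\beta^k k!\,S(i,k,\alpha,-\beta,0)x^k$, and then swap the order of the two finite sums; the result is verbatim \eqref{equation:7}. I expect the only delicate step to be the middle one: making precise that once the split $n=i+(n-i)$ is fixed the Property~\ref{property:1} section and the $\lambda$ Property~\ref{propoerty:2} sections are filled independently, and that the $\gamma=0$ specialisation of $A^{\lambda,x}$ indeed encodes only the Property~\ref{propoerty:2} sections. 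Everything downstream of that is routine manipulation of binomial coefficients (and, in the purely algebraic route, the sign bookkeeping $(-1)^{n+k}(-1)^{n-i}=(-1)^{k+i}$).
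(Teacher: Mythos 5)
Your proposal is correct and matches the paper's treatment: the paper likewise obtains \eqref{equation:7} both algebraically (by combining \eqref{equation:9} with \eqref{equation:8}) and combinatorially, by choosing which $i$ elements avoid the $\gamma$-compartment cell, distributing them into $k$ coloured cells with $\lambda-1$ bars inserted, and placing the remaining $n-i$ elements in the Property~\ref{property:1} cell in $(\gamma|-\alpha)_{n-i}$ ways. Your only cosmetic difference is that you factor the combinatorial count through the intermediate convolution $A^{\lambda,x}_n(\alpha,\beta,\gamma)=\sum_i\binom{n}{i}(\gamma|-\alpha)_{n-i}A^{\lambda,x}_i(\alpha,\beta,0)$ and then invoke Theorem~\ref{theorem:5}, whereas the paper interprets all the factors in one step.
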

\begin{proof}
Given $n$ elements, $i$ of the elements can be distributed into $k$ cells all having $\beta$ compartments in
$(-1)^{k+i}\beta^kk!S(i,k,\alpha,\beta,0)$ ways. The $i$ elements can be chosen in $\binom{n}{i}$ ways. The $k$ cells can be colored in with $x$
available colors $x^k$ ways. Given $\lambda-1$ bars can be inserted in-between the $k$ cells in $\binom{k+\lambda-1}{k}$ ways. The remaining
$n-k$ elements can be distributed into a cell having $\gamma$ compartments in $(\gamma|-\alpha)_{n-k}$ ways.
\end{proof}

\section{Some Properties of the numbers $A^{\lambda,x}_{n}(\alpha,\beta,\gamma)$. }
In this section, we give some properties of $A_{n}^{\lambda,x}(\alpha
,\beta,\gamma)$ such as recurrence relations, convolution formulas and
explicit expressions. The classical Euler polynomials and some of their generalisations are well known in the literature for instance in \cite{EulerP1,EulerP2,EulerP3,EulerP4,EulerP5,EulerP6,EulerP7,EulerP8,EulerP9,EulerP10}. Moreover, we introduce a new family of polynomials which generalises the classical Euler polynomials, and 
we call \textit{higher order generalised Euler polynomials} with generalized Stirling
numbers and deal with some basic properties of these polynomials.
\\\\
The polynomial $A_{n}^{\lambda,x}(\alpha,\beta,\gamma)$ satisfy the following
recurrence relations:
\begin{theorem}
We have
\begin{equation}
xA_{n}^{\lambda+1,x}(\alpha,\beta,\gamma+\beta)=\left(  x+1\right)
A_{n}^{\lambda+1,x}(\alpha,\beta,\gamma)-A_{n}^{\lambda,x}(\alpha,\beta
,\gamma) \label{3.1}%
\end{equation}
and
\begin{equation}
A_{n+1}^{\lambda,x}(\alpha,\beta,\gamma-\alpha)-\left(  x+1\right)
\lambda\beta A_{n}^{\lambda+1,x}(\alpha,\beta,\gamma)=\left(  \gamma
-\alpha-\lambda\beta\right)  A_{n}^{\lambda,x}(\alpha,\beta,\gamma).
\label{3.2}%
\end{equation}

\end{theorem}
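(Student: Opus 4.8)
The plan is to argue entirely at the level of the exponential generating function \eqref{equation:11}. Write $f_{\lambda,\gamma}(t):=(1-\alpha t)^{-\gamma/\alpha}\bigl[1-x((1-\alpha t)^{-\beta/\alpha}-1)\bigr]^{-\lambda}$, so that $f_{\lambda,\gamma}(t)=\sum_{n\ge 0}A_n^{\lambda,x}(\alpha,\beta,\gamma)\,t^n/n!$, and abbreviate $F(t):=(1-\alpha t)^{-\beta/\alpha}$ and $H(t):=[1-x(F(t)-1)]^{-1}$, so that $f_{\lambda,\gamma}=(1-\alpha t)^{-\gamma/\alpha}H^{\lambda}$. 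Everything rests on two trivial shift relations: $f_{\lambda,\gamma+\beta}=F\,f_{\lambda,\gamma}$ (raising $\gamma$ by $\beta$ multiplies the first factor by $F$) and $f_{\lambda,\gamma-\alpha}=(1-\alpha t)\,f_{\lambda,\gamma}$ (lowering $\gamma$ by $\alpha$ multiplies it by $1-\alpha t$), together with the observation that increasing $\lambda$ by $1$ multiplies by $H$.

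For \eqref{3.1} I would proceed purely algebraically. Using $f_{\lambda+1,\gamma+\beta}=F\,f_{\lambda+1,\gamma}$, the claimed identity is equivalent to the generating-function statement $x F\,f_{\lambda+1,\gamma}=(x+1)f_{\lambda+1,\gamma}-f_{\lambda,\gamma}$, that is, $\bigl(xF-(x+1)\bigr)f_{\lambda+1,\gamma}=-f_{\lambda,\gamma}$. The key point is that $xF-(x+1)=x(F-1)-1=-\bigl(1-x(F-1)\bigr)=-H^{-1}$, so the left-hand side equals $-H^{-1}\cdot(1-\alpha t)^{-\gamma/\alpha}H^{\lambda+1}=-(1-\alpha t)^{-\gamma/\alpha}H^{\lambda}=-f_{\lambda,\gamma}$. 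Extracting the coefficient of $t^n/n!$ gives \eqref{3.1}.

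For \eqref{3.2} I would differentiate. Since $F'=\beta F/(1-\alpha t)$, $\frac{d}{dt}(1-\alpha t)^{-\gamma/\alpha}=\gamma(1-\alpha t)^{-\gamma/\alpha}/(1-\alpha t)$, and $\frac{d}{dt}H^{\lambda}=\lambda H^{\lambda-1}H'=\lambda x\beta F H^{\lambda+1}/(1-\alpha t)$, the product rule together with $F f_{\lambda+1,\gamma}=f_{\lambda+1,\gamma+\beta}$ yields $(1-\alpha t)f_{\lambda,\gamma}'=\gamma f_{\lambda,\gamma}+\lambda x\beta\,f_{\lambda+1,\gamma+\beta}$. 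On the other hand, differentiating $f_{\lambda,\gamma-\alpha}=(1-\alpha t)f_{\lambda,\gamma}$ gives $f_{\lambda,\gamma-\alpha}'=-\alpha f_{\lambda,\gamma}+(1-\alpha t)f_{\lambda,\gamma}'$; combining the two produces $f_{\lambda,\gamma-\alpha}'=(\gamma-\alpha)f_{\lambda,\gamma}+\lambda\beta\bigl(x f_{\lambda+1,\gamma+\beta}\bigr)$. Now I substitute \eqref{3.1} in the form $x f_{\lambda+1,\gamma+\beta}=(x+1)f_{\lambda+1,\gamma}-f_{\lambda,\gamma}$ to eliminate $f_{\lambda+1,\gamma+\beta}$, obtaining $f_{\lambda,\gamma-\alpha}'=(\gamma-\alpha-\lambda\beta)f_{\lambda,\gamma}+(x+1)\lambda\beta f_{\lambda+1,\gamma}$; reading off the coefficient of $t^n/n!$, and noting it is $A_{n+1}^{\lambda,x}(\alpha,\beta,\gamma-\alpha)$ on the left, gives \eqref{3.2} after rearrangement.

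I do not expect a real obstacle; the one thing to watch is the bookkeeping of the parameter shifts and the chain-rule factors $1/(1-\alpha t)$ in the differentiation step, which must line up so that the extra weight $1-\alpha t$ coming from $f_{\lambda,\gamma-\alpha}=(1-\alpha t)f_{\lambda,\gamma}$ cancels correctly. A combinatorial derivation from Theorem~\ref{theorem:1}, in the style of Theorems~\ref{theorem:6}--\ref{theorem:4}, is also available, but the generating-function route is shorter and self-contained.
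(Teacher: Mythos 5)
Your argument is correct, and both halves check out: the identity $x(1-\alpha t)^{-\beta/\alpha}-(x+1)=-\bigl(1-x[(1-\alpha t)^{-\beta/\alpha}-1]\bigr)$ does reduce \eqref{3.1} to a one-line cancellation, and your $t$-differentiation plus the shift $f_{\lambda,\gamma-\alpha}=(1-\alpha t)f_{\lambda,\gamma}$ followed by substitution of \eqref{3.1} yields \eqref{3.2} exactly. The route differs from the paper's only in packaging. For \eqref{3.1} the paper differentiates the generating function \eqref{equation:11} with respect to $x$ and equates two different rewritings of the resulting expression $\lambda\bigl((1-\alpha t)^{-\beta/\alpha}-1\bigr)(1-\alpha t)^{-\gamma/\alpha}\bigl(1-x[(1-\alpha t)^{-\beta/\alpha}-1]\bigr)^{-\lambda-1}$ (one absorbing the factor into the denominator, one absorbing it into the exponent of $1-\alpha t$); the algebraic kernel is the same identity you use, but your version dispenses with the $\partial_x$ detour and is the more economical of the two. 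For \eqref{3.2} the paper specializes the already-established recurrence of Theorem~\ref{theorem:6} (proved combinatorially via the position of the $(n+1)$th element) at $\gamma-\alpha$ and eliminates the common term against $\lambda\beta$ times \eqref{3.1}; your $t$-differentiation simply re-derives that recurrence analytically --- which the paper itself also does in the paragraph following the proof --- so the two arguments are substantively the same. The only bookkeeping point worth flagging is that the displayed intermediate equation in the paper's proof of \eqref{3.2} has its two terms with reversed signs relative to what Theorem~\ref{theorem:6} actually gives at $\gamma-\alpha$; your version has the signs right and reaches the stated conclusion cleanly.
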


\begin{proof}
From (\ref{equation:11}) we have%
\[
\sum\limits_{n=0}^{\infty}\frac{d}{dx}A_{n}^{\lambda,x}(\alpha,\beta
,\gamma)\frac{t^{n}}{n!}=\lambda\left(  (1-\alpha t)^{\frac{-\beta}{\alpha}%
}-1\right)  \frac{(1-\alpha t)^{\frac{-\gamma}{\alpha}}}{\left(  1-x[(1-\alpha
t)^{\frac{-\beta}{\alpha}}-1]\right)  ^{\lambda+1}}.
\]
The right hand side of the above equation can be evaluated in two ways: In the
first way, we rewrite the right hand side as
\begin{align*}
&  \lambda\left(  (1-\alpha t)^{\frac{-\beta}{\alpha}}-1\right)
\frac{(1-\alpha t)^{\frac{-\gamma}{\alpha}}}{\left(  1-x[(1-\alpha
t)^{\frac{-\beta}{\alpha}}-1]\right)  ^{\lambda+1}}\\
&  \quad=-\frac{\lambda}{x}\frac{(1-\alpha t)^{\frac{-\gamma}{\alpha}}%
}{\left(  1-x[(1-\alpha t)^{\frac{-\beta}{\alpha}}-1]\right)  ^{\lambda}%
}+\frac{\lambda}{x}\frac{(1-\alpha t)^{\frac{-\gamma}{\alpha}}}{\left(
1-x[(1-\alpha t)^{\frac{-\beta}{\alpha}}-1]\right)  ^{\lambda+1}}.
\end{align*}
In the second way, we have
\begin{align*}
&  \lambda\left(  (1-\alpha t)^{\frac{-\beta}{\alpha}}-1\right)
\frac{(1-\alpha t)^{\frac{-\gamma}{\alpha}}}{\left(  1-x[(1-\alpha
t)^{\frac{-\beta}{\alpha}}-1]\right)  ^{\lambda+1}}\\
&  \quad=\lambda\frac{(1-\alpha t)^{\frac{-\left(  \gamma+\beta\right)
}{\alpha}}}{\left(  1-x[(1-\alpha t)^{\frac{-\beta}{\alpha}}-1]\right)
^{\lambda+1}}-\lambda\frac{(1-\alpha t)^{\frac{-\gamma}{\alpha}}}{\left(
1-x[(1-\alpha t)^{\frac{-\beta}{\alpha}}-1]\right)  ^{\lambda+1}}.
\end{align*}
Combining these two identities gives the first formula of this theorem.

For the second recurrence relation, we replace $\gamma$ with $\gamma-\alpha$
in Theorem \ref{theorem:6} and multiply (\ref{3.1}) by $\lambda\beta$ to
obtain%
\[
x\lambda\beta A_{n}^{\lambda+1,x}(\alpha,\beta,\gamma+\beta)=\left(
\gamma-\alpha\right)  A_{n}^{\lambda,x}(\alpha,\beta,\gamma)-A_{n+1}%
^{\lambda,x}(\alpha,\beta,\gamma-\alpha)
\]
and%
\[
x\lambda\beta A_{n}^{\lambda+1,x}(\alpha,\beta,\gamma+\beta)=\lambda
\beta\left(  x+1\right)  A_{n}^{\lambda+1,x}(\alpha,\beta,\gamma)-\lambda\beta
A_{n}^{\lambda,x}(\alpha,\beta,\gamma),
\]
respectively. Since the left hand side of the above equations are equal, we
arrive at (\ref{3.2}).
\end{proof}

Differentiating both sides of (\ref{equation:11}) with respect to $t$ gives%
\[
\sum\limits_{n=0}^{\infty}A_{n+1}^{\lambda,x}(\alpha,\beta,\gamma)\frac{t^{n}%
}{n!}=\gamma\frac{(1-\alpha t)^{\frac{-\left(  \gamma+\alpha\right)  }{\alpha
}}}{\left(  1-x[(1-\alpha t)^{\frac{-\beta}{\alpha}}-1]\right)  ^{\lambda}%
}+x\lambda\beta\frac{(1-\alpha t)^{\frac{-\left(  \gamma+\alpha+\beta\right)
}{\alpha}}}{\left(  1-x[(1-\alpha t)^{\frac{-\beta}{\alpha}}-1]\right)
^{\lambda+1}}.
\]
This equation yields Theorem \ref{theorem:6}. Now, we want to deal with this
equation from a different point of view. For $\gamma=\gamma_{1}+\gamma_{2}$
and $\lambda=\lambda_{1}+\lambda_{2},$ the right hand side of the above
equation can be rewriten as%
\fontsize{9}{1}
\[
\left(  \gamma_{1}+\gamma_{2}\right)  \frac{(1-\alpha t)^{\frac{-\left(
\gamma_{1}+\gamma_{2}+\alpha\right)  }{\alpha}}}{\left(  1-x[(1-\alpha
t)^{\frac{-\beta}{\alpha}}-1]\right)  ^{\lambda_{1}+\lambda_{2}}}%
+x\beta\left(  \lambda_{1}+\lambda_{2}\right)  \frac{(1-\alpha t)^{\frac
{-\left(  \alpha+\beta+\gamma_{1}\right)  }{\alpha}}}{\left(  1-x[(1-\alpha
t)^{\frac{-\beta}{\alpha}}-1]\right)  ^{\lambda_{1}}}\times\frac{(1-\alpha
t)^{\frac{-\gamma_{2}}{\alpha}}}{\left(  1-x[(1-\alpha t)^{\frac{-\beta
}{\alpha}}-1]\right)  ^{\lambda_{2}}}.
\]\normalsize
Then, we have the following convolution formula:

\begin{theorem}
\label{teo1}We have%
\begin{align*}
&  x\beta\left(  \lambda_{1}+\lambda_{2}\right)  \sum_{k=0}^{n}\binom{n}%
{k}A_{k}^{\lambda_{1},x}(\alpha,\beta,\alpha+\beta+\gamma_{1})A_{n-k}%
^{\lambda_{2},x}(\alpha,\beta,\gamma_{2})\\
&  \quad=A_{n+1}^{\lambda_{1}+\lambda_{2},x}(\alpha,\beta,\gamma_{1}%
+\gamma_{2})-\left(  \gamma_{1}+\gamma_{2}\right)  A_{n}^{\lambda_{1}%
+\lambda_{2},x}(\alpha,\beta,\gamma_{1}+\gamma_{2}+\alpha).
\end{align*}

\end{theorem}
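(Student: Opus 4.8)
The plan is to read off the claimed identity directly from the factored form of the derivative of \eqref{equation:11} that is displayed immediately before the theorem, by recognising each factor as an instance of the generating function in \eqref{equation:11} and then taking a Cauchy product. First I would observe that, by \eqref{equation:11} with parameters $(\alpha,\beta,\alpha+\beta+\gamma_{1})$ and multiplicity $\lambda_{1}$,
\[
\frac{(1-\alpha t)^{\frac{-\left(  \alpha+\beta+\gamma_{1}\right)  }{\alpha}}}{\left(  1-x[(1-\alpha t)^{\frac{-\beta}{\alpha}}-1]\right)  ^{\lambda_{1}}}=\sum_{k=0}^{\infty}A_{k}^{\lambda_{1},x}(\alpha,\beta,\alpha+\beta+\gamma_{1})\frac{t^{k}}{k!},
\]
and similarly the factor $(1-\alpha t)^{\frac{-\gamma_{2}}{\alpha}}\big(1-x[(1-\alpha t)^{\frac{-\beta}{\alpha}}-1]\big)^{-\lambda_{2}}$ is the exponential generating function of $A_{m}^{\lambda_{2},x}(\alpha,\beta,\gamma_{2})$. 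Likewise the first term on the right, $(\gamma_{1}+\gamma_{2})(1-\alpha t)^{\frac{-(\gamma_{1}+\gamma_{2}+\alpha)}{\alpha}}\big(1-x[(1-\alpha t)^{\frac{-\beta}{\alpha}}-1]\big)^{-(\lambda_{1}+\lambda_{2})}$, is $(\gamma_{1}+\gamma_{2})\sum_{n\ge0}A_{n}^{\lambda_{1}+\lambda_{2},x}(\alpha,\beta,\gamma_{1}+\gamma_{2}+\alpha)\,t^{n}/n!$ by \eqref{equation:11}.

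Next I would multiply the two series in the second term using the standard Cauchy product for exponential generating functions, which contributes the coefficient $\sum_{k=0}^{n}\binom{n}{k}A_{k}^{\lambda_{1},x}(\alpha,\beta,\alpha+\beta+\gamma_{1})A_{n-k}^{\lambda_{2},x}(\alpha,\beta,\gamma_{2})$ to $t^{n}/n!$, and carries the scalar factor $x\beta(\lambda_{1}+\lambda_{2})$ in front. On the left-hand side of the differentiated identity the coefficient of $t^{n}/n!$ is $A_{n+1}^{\lambda_{1}+\lambda_{2},x}(\alpha,\beta,\gamma_{1}+\gamma_{2})$, since $\gamma=\gamma_{1}+\gamma_{2}$ and $\lambda=\lambda_{1}+\lambda_{2}$. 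Equating coefficients of $t^{n}/n!$ on both sides then yields
\[
A_{n+1}^{\lambda_{1}+\lambda_{2},x}(\alpha,\beta,\gamma_{1}+\gamma_{2})=\left(\gamma_{1}+\gamma_{2}\right)A_{n}^{\lambda_{1}+\lambda_{2},x}(\alpha,\beta,\gamma_{1}+\gamma_{2}+\alpha)+x\beta\left(\lambda_{1}+\lambda_{2}\right)\sum_{k=0}^{n}\binom{n}{k}A_{k}^{\lambda_{1},x}(\alpha,\beta,\alpha+\beta+\gamma_{1})A_{n-k}^{\lambda_{2},x}(\alpha,\beta,\gamma_{2}),
\]
and rearranging this gives exactly the statement of the theorem.

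There is essentially no serious obstacle here: the algebraic rewriting of $\lambda\big((1-\alpha t)^{-\beta/\alpha}-1\big)(1-\alpha t)^{-\gamma/\alpha}\big(1-x[\cdots]\big)^{-(\lambda+1)}$ into the product form has already been carried out in the text preceding the theorem, so the only work left is the bookkeeping of matching parameters and applying the Cauchy product. The one point requiring a little care is making sure the shifted third argument is $\alpha+\beta+\gamma_{1}$ (not $\alpha+\beta+\gamma_{2}$ or a symmetric combination), i.e. that the asymmetry between the two halves of the split $\lambda=\lambda_{1}+\lambda_{2}$, $\gamma=\gamma_{1}+\gamma_{2}$ is recorded correctly; this is dictated precisely by which of the two factors inherits the extra $(1-\alpha t)^{-(\alpha+\beta)/\alpha}$ from the numerator in the displayed factorisation.
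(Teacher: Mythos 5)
Your route is the same as the paper's: differentiate \eqref{equation:11}, rewrite the second term of the derivative as a product of two instances of \eqref{equation:11}, take the Cauchy product of exponential generating functions, and compare coefficients of $t^{n}/n!$. The paper offers no proof of Theorem \ref{teo1} beyond the displayed factorisation, and you defer to that display at exactly the point where the argument breaks down. The second term of the differentiated series carries the factor $\left(1-x[(1-\alpha t)^{-\beta/\alpha}-1]\right)^{-(\lambda+1)}=\left(1-x[\cdots]\right)^{-(\lambda_{1}+\lambda_{2}+1)}$, whereas the product you substitute for it carries only $\left(1-x[\cdots]\right)^{-\lambda_{1}}\cdot\left(1-x[\cdots]\right)^{-\lambda_{2}}=\left(1-x[\cdots]\right)^{-(\lambda_{1}+\lambda_{2})}$. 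The claimed factorisation is therefore not an identity: it is off by one power of the denominator. This is precisely the ``bookkeeping'' you declared unproblematic — you checked that the numerator exponents add up to $\alpha+\beta+\gamma_{1}+\gamma_{2}$, but not that the orders add up to $\lambda+1$.

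The identity as stated consequently fails. At $n=1$, using $A_{0}^{\lambda,x}=1$ and $A_{1}^{\lambda,x}(\alpha,\beta,\gamma)=\gamma+x\lambda\beta$, the left-hand side equals $x\beta(\lambda_{1}+\lambda_{2})\left[\alpha+\beta+\gamma_{1}+\gamma_{2}+x\beta(\lambda_{1}+\lambda_{2})\right]$, while the right-hand side (by Theorem \ref{theorem:6} with $\lambda=\lambda_{1}+\lambda_{2}$, $\gamma=\gamma_{1}+\gamma_{2}$) equals $x\beta(\lambda_{1}+\lambda_{2})\left[\alpha+\beta+\gamma_{1}+\gamma_{2}+x\beta(\lambda_{1}+\lambda_{2}+1)\right]$; the two sides differ by $x^{2}\beta^{2}(\lambda_{1}+\lambda_{2})$. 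The repair is to let one of the two factors absorb the extra power of the denominator, i.e.\ replace $A_{k}^{\lambda_{1},x}(\alpha,\beta,\alpha+\beta+\gamma_{1})$ by $A_{k}^{\lambda_{1}+1,x}(\alpha,\beta,\alpha+\beta+\gamma_{1})$ in the convolution sum. With that correction your Cauchy-product argument goes through verbatim, and the result is then also consistent with what one gets by combining Theorem \ref{theorem:6} with Theorem \ref{teo2}.
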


 Another convolution formula for these polynomials is as follows:

\begin{theorem}
\label{teo2}%
\[
\sum_{k=0}^{n}\binom{n}{k}A_{k}^{\lambda_{1},x}(\alpha,\beta,\alpha
+\beta+\gamma_{1})A_{n-k}^{\lambda_{2},x}(\alpha,\beta,\gamma_{2}%
)=A_{n}^{\lambda_{1}+\lambda_{2},x}(\alpha,\beta,\alpha+\beta+\gamma
_{1}+\gamma_{2}).
\]

\end{theorem}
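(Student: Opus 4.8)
The plan is to read the identity directly off the exponential generating function \eqref{equation:11}. Recall that for admissible parameters,
\[
\sum_{n=0}^{\infty}A_n^{\lambda,x}(\alpha,\beta,\delta)\frac{t^n}{n!}=(1-\alpha t)^{\frac{-\delta}{\alpha}}\left[\frac{1}{1-x[(1-\alpha t)^{\frac{-\beta}{\alpha}}-1]}\right]^{\lambda},
\]
so the third argument $\delta$ enters only through the factor $(1-\alpha t)^{-\delta/\alpha}$ and the order $\lambda$ only through the power of the bracket. First I would write the two generating functions attached to the left-hand side: the series for $A_k^{\lambda_1,x}(\alpha,\beta,\alpha+\beta+\gamma_1)$ carries the factor $(1-\alpha t)^{-(\alpha+\beta+\gamma_1)/\alpha}$ with bracket power $\lambda_1$, and the series for $A_{n-k}^{\lambda_2,x}(\alpha,\beta,\gamma_2)$ carries $(1-\alpha t)^{-\gamma_2/\alpha}$ with bracket power $\lambda_2$.

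Next I would multiply these two series. Since the left-hand side of the claimed identity is exactly the binomial (Cauchy) convolution $\sum_{k=0}^n \binom{n}{k}A_k^{\lambda_1,x}(\alpha,\beta,\alpha+\beta+\gamma_1)A_{n-k}^{\lambda_2,x}(\alpha,\beta,\gamma_2)$, its exponential generating function in $t$ is the product of the two generating functions above. Upon multiplying, the $(1-\alpha t)$-exponents add, $-(\alpha+\beta+\gamma_1)/\alpha+(-\gamma_2/\alpha)=-(\alpha+\beta+\gamma_1+\gamma_2)/\alpha$, and the bracket powers add to $\lambda_1+\lambda_2$. Hence the product equals
\[
(1-\alpha t)^{\frac{-(\alpha+\beta+\gamma_1+\gamma_2)}{\alpha}}\left[\frac{1}{1-x[(1-\alpha t)^{\frac{-\beta}{\alpha}}-1]}\right]^{\lambda_1+\lambda_2},
\]
which by \eqref{equation:11} is precisely $\sum_{n=0}^{\infty} A_n^{\lambda_1+\lambda_2,x}(\alpha,\beta,\alpha+\beta+\gamma_1+\gamma_2)\, t^n/n!$. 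Comparing the coefficients of $t^n/n!$ then yields the theorem.

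There is essentially no hard step here; the only point to watch is the bookkeeping of the third parameter — one must observe that $(\alpha+\beta+\gamma_1)+\gamma_2$ collapses to $\alpha+\beta+\gamma_1+\gamma_2$, so that a single copy of $\alpha+\beta$ (not two) appears on the right, which is why this convolution is ``clean'' compared with Theorem~\ref{teo1}. For a purely combinatorial alternative one could instead invoke Theorem~\ref{theorem:1}: partition the $n$ elements into the $k$ that go into the unit formed by merging the property-\ref{property:1} section with one adjacent property-\ref{propoerty:2} block on the side carrying $\lambda_1$ bars (which produces the $\alpha+\beta$ shift there) and the remaining $n-k$ that fill the $\lambda_2$-bar side, then check that gluing the two pieces along that shared unit reproduces exactly an arrangement enumerated by $A_n^{\lambda_1+\lambda_2,x}(\alpha,\beta,\alpha+\beta+\gamma_1+\gamma_2)$. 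I would, however, present the generating-function argument, as it is shorter and leaves no combinatorial case-checking.
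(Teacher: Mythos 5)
Your proposal is correct and is essentially the paper's own proof: the paper likewise writes the generating function of $A_n^{\lambda_1+\lambda_2,x}(\alpha,\beta,\alpha+\beta+\gamma_1+\gamma_2)$ as the product of the generating functions for the $(\lambda_1,\alpha+\beta+\gamma_1)$ and $(\lambda_2,\gamma_2)$ parameters and compares coefficients of $t^n/n!$ via the Cauchy product. The only cosmetic difference is direction (the paper factors the right-hand side, you multiply to obtain it), so there is nothing further to add.
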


\begin{proof}
For $\gamma=\alpha+\beta+\gamma_{1}+\gamma_{2}$ and $\lambda=\lambda
_{1}+\lambda_{2}$ in (\ref{equation:11}), we have%
\begin{align*}\fontsize{7}{1}
\sum\limits_{n=0}^{\infty}A_{n}^{\lambda_{1}+\lambda_{2},x}(\alpha
,\beta,\alpha+\beta+\gamma_{1}&+\gamma_{2})\frac{t^{n}}{n!}   \\& =\frac
{(1-\alpha t)^{\frac{-\left(  \alpha+\beta+\gamma_{1}\right)  }{\alpha}}%
}{\left(  1-x[(1-\alpha t)^{\frac{-\beta}{\alpha}}-1]\right)  ^{\lambda_{1}}%
}\times\frac{(1-\alpha t)^{\frac{-\gamma_{2}}{\alpha}}}{\left(  1-x[(1-\alpha
t)^{\frac{-\beta}{\alpha}}-1]\right)  ^{\lambda_{2}}}\\
&  =\sum\limits_{n=0}^{\infty}\left[  \sum_{k=0}^{n}\binom{n}{k}A_{k}%
^{\lambda_{1},x}(\alpha,\beta,\alpha+\beta+\gamma_{1})A_{n-k}^{\lambda_{2}%
,x}(\alpha,\beta,\gamma_{2})\right]  \frac{t^{n}}{n!}.
\end{align*}
Comparing the coefficients of the above equation gives the desired identity.
\end{proof}

It is worth noting that Theorem \ref{teo1} and Theorem \ref{teo2} are the
generalizations of some convolution formulas given in \cite{A comb analysis of geo polynomials}.

From (\ref{equation:11}), one can obtain that%
\begin{equation}
A_{n}^{\lambda,x}(\alpha,\beta,\gamma+\beta\lambda)=A_{n}^{\lambda
,-x-1}(\alpha,-\beta,\gamma)=\left(  -1\right)  ^{n}A_{n}^{\lambda
,-x-1}(\alpha,\beta,-\gamma). \label{3.7}%
\end{equation}
Then using (\ref{equation:9}) gives the following explicit formulas for
$A_{n}^{\lambda,x}(\alpha,\beta,\gamma):$

\begin{theorem}
For all non-negative integers $n,$%
\begin{equation}
A_{n}^{\lambda,x}(\alpha,\beta,\gamma)=\left(  -1\right)  ^{n}\sum
\limits_{k=0}^{n}\binom{k+\lambda-1}{k}(-\beta)^{k}k!S_{2}(n,k,\alpha
,\beta,\beta\lambda-\gamma)\left(  x+1\right)  ^{k}, \label{3.8}%
\end{equation}

\end{theorem}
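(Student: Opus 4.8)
The plan is to read off (\ref{3.8}) by combining the symmetry relation (\ref{3.7}) with the explicit expansion (\ref{equation:9}); no new combinatorial or generating-function input is needed, only a reparametrisation and careful sign bookkeeping. The observation is that the first equality in (\ref{3.7}) re-expresses $A_{n}^{\lambda,x}(\alpha,\beta,\gamma)$ as a member of the same family in which $\beta$ has been replaced by $-\beta$ and the colour variable $x$ by $-x-1$, and (\ref{equation:9}) then writes that member out in terms of the generalised Stirling numbers.

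First I would replace $\gamma$ by $\gamma-\beta\lambda$ throughout the first equality of (\ref{3.7}) to obtain
\[
A_{n}^{\lambda,x}(\alpha,\beta,\gamma)=A_{n}^{\lambda,-x-1}(\alpha,-\beta,\gamma-\beta\lambda).
\]
Then I would apply (\ref{equation:9}) to the right-hand side with the triple $(\alpha,-\beta,\gamma-\beta\lambda)$ in place of $(\alpha,\beta,\gamma)$ and with $-x-1$ in place of $x$. Since $-(-\beta)=\beta$ and $-(\gamma-\beta\lambda)=\beta\lambda-\gamma$, this produces
\[
A_{n}^{\lambda,x}(\alpha,\beta,\gamma)=\sum_{k=0}^{n}\binom{k+\lambda-1}{k}(-1)^{n+k}(-\beta)^{k}k!\,S(n,k,\alpha,\beta,\beta\lambda-\gamma)(-x-1)^{k}.
\]
It remains only to tidy up: writing $(-x-1)^{k}=(-1)^{k}(x+1)^{k}$ and using $(-1)^{n+k}(-1)^{k}=(-1)^{n}$, the sum collapses to
\[
A_{n}^{\lambda,x}(\alpha,\beta,\gamma)=(-1)^{n}\sum_{k=0}^{n}\binom{k+\lambda-1}{k}(-\beta)^{k}k!\,S(n,k,\alpha,\beta,\beta\lambda-\gamma)(x+1)^{k},
\]
which is exactly (\ref{3.8}) (with $S_{2}$ understood as the Hsu--Shiue numbers $S$).

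There is no genuine obstacle here; the only delicate point is the substitution into (\ref{equation:9}), where one must remember that sending $\beta\mapsto-\beta$ turns the last argument of the Stirling symbol from $-\gamma$ into $-(\gamma-\beta\lambda)=\beta\lambda-\gamma$ (rather than merely negating it) and simultaneously turns $\beta^{k}$ into $(-\beta)^{k}$, while the lone factor $(-1)^{k}$ coming from $(-x-1)^{k}$ combines with the already present $(-1)^{n+k}$ to leave the clean prefactor $(-1)^{n}$. If one prefers not to invoke (\ref{3.7}) as a black box, it can be re-derived in place from the generating function (\ref{equation:11}) via the substitution $u=(1-\alpha t)^{-\beta/\alpha}$, since then $\frac{u}{(1+x)-xu}=\frac{1}{1-(-x-1)[u^{-1}-1]}$ and $u^{-1}=(1-\alpha t)^{-(-\beta)/\alpha}$, which is precisely the generating function of $A_{n}^{\lambda,-x-1}(\alpha,-\beta,\cdot)$.
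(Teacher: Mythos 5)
Your argument is correct and is essentially the paper's own route: the paper presents \eqref{3.8} as an immediate consequence of the symmetry \eqref{3.7} together with the explicit expansion \eqref{equation:9}, which is exactly the substitution $\gamma\mapsto\gamma-\beta\lambda$, $\beta\mapsto-\beta$, $x\mapsto-x-1$ that you carry out. Your sign bookkeeping $(-1)^{n+k}(-1)^{k}=(-1)^{n}$, the resulting last argument $\beta\lambda-\gamma$ of the Stirling symbol, and the identification of $S_{2}$ with the Hsu--Shiue numbers $S$ are all consistent with the paper.
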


Before giving the main theorem of this section we first recall the the
generalized exponential polynomials $S_{n}\left(  x;\alpha,\beta
,\gamma\right)  $ defined by means of the generating function \cite{A unified approach to generalized Stirling numbers}
\[
\sum\limits_{n=0}^{\infty}S_{n}\left(  x;\alpha,\beta,r\right)  \frac{t^{n}%
}{n!}=(1+\alpha t)^{\frac{r}{\alpha}}\exp\left(  \frac{x}{\beta}\left[
(1+\alpha t)^{\frac{\beta}{\alpha}}-1\right]  \right)  .
\]
These polynomials have the explicit expression \cite{A unified approach to generalized Stirling numbers}%
\begin{equation}
S_{n}\left(  x;\alpha,\beta,r\right)  =\sum_{k=0}^{n}S_{2}(n,k,\alpha
,\beta,r)x^{k}, \label{3.3}%
\end{equation}
and extension of Spivey's Bell number formula to $S_{n}\left(  x;\alpha
,\beta,\gamma\right)  $ \cite{LC,X} as
\[
S_{n+m}\left(  x;\alpha,\beta,r\right)  =\sum\limits_{k=0}^{n}\sum
\limits_{j=0}^{m}\binom{n}{k}S_{2}(n,k,\alpha,\beta,r)\left(  j\beta
-m\alpha\mid\alpha\right)  _{n-k}S_{k}\left(  x;\alpha,\beta,r\right)  x^{j}.
\]
Using this formula and the well-known identity%
\[
\sum\limits_{n=0}^{\infty}\left(  \beta\mid\alpha\right)  _{n}\frac{t^{n}}%
{n!}=(1+\alpha t)^{\frac{\beta}{\alpha}}%
\]

we have the following generating function for generalized exponential polynomials: 

\begin{lemma}
For all non-negative integer $m$%
\fontsize{10}{1}
\begin{equation}
\sum\limits_{n=0}^{\infty}S_{n+m}\left(  x;\alpha,\beta,r\right)  \frac{t^{n}%
}{n!}=(1+\alpha t)^{\frac{r-m\alpha}{\alpha}}\exp\left(  \frac{x}{\beta
}\left[  (1+\alpha t)^{\frac{\beta}{\alpha}}-1\right]  \right)  S_{m}\left(
x(1+\alpha t)^{\frac{\beta}{\alpha}};\alpha,\beta,r\right)  . \label{3.4}%
\end{equation}\normalsize
\end{lemma}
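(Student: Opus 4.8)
The plan is to run the extension of Spivey's formula displayed above through the exponential generating function: multiply both sides by $t^{n}/n!$, sum over $n\ge 0$, and reorganise the right-hand side into a product of two generating functions times a finite sum that collapses to $S_{m}$ evaluated at a shifted argument.

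First I would push the sum over $j$ (which runs from $0$ to $m$) to the outside, extracting the factor that depends only on $m$ and $j$, namely $S_{2}(m,j,\alpha,\beta,r)x^{j}$ (carrying the indices $m,j$, exactly as $S(m,j)$ does in the classical Spivey identity). What remains is the double sum
\[
\sum_{n\ge 0}\frac{t^{n}}{n!}\sum_{k=0}^{n}\binom{n}{k}\bigl(j\beta-m\alpha\mid\alpha\bigr)_{n-k}\,S_{k}(x;\alpha,\beta,r),
\]
which is the exponential generating function of a binomial convolution, hence factors as the product of $\sum_{k\ge 0}S_{k}(x;\alpha,\beta,r)t^{k}/k!$ — the defining generating function of $S_{n}(x;\alpha,\beta,r)$ — and $\sum_{\ell\ge 0}\bigl(j\beta-m\alpha\mid\alpha\bigr)_{\ell}\,t^{\ell}/\ell!$, which by the quoted identity $\sum_{n}(\beta\mid\alpha)_{n}t^{n}/n!=(1+\alpha t)^{\beta/\alpha}$ equals $(1+\alpha t)^{(j\beta-m\alpha)/\alpha}$.

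Assembling the pieces, the right-hand side becomes
\[
(1+\alpha t)^{\frac{r}{\alpha}}\exp\!\Bigl(\tfrac{x}{\beta}\bigl[(1+\alpha t)^{\frac{\beta}{\alpha}}-1\bigr]\Bigr)\,(1+\alpha t)^{-m}\sum_{j=0}^{m}S_{2}(m,j,\alpha,\beta,r)\bigl(x(1+\alpha t)^{\frac{\beta}{\alpha}}\bigr)^{j},
\]
where I have written $(1+\alpha t)^{(j\beta-m\alpha)/\alpha}=(1+\alpha t)^{-m}\bigl((1+\alpha t)^{\beta/\alpha}\bigr)^{j}$ and absorbed $(1+\alpha t)^{\beta/\alpha}$ into the argument of the polynomial. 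Two final moves then close the argument: recognise by the explicit expression \eqref{3.3} that $\sum_{j=0}^{m}S_{2}(m,j,\alpha,\beta,r)y^{j}=S_{m}(y;\alpha,\beta,r)$ with $y=x(1+\alpha t)^{\beta/\alpha}$, and combine $(1+\alpha t)^{r/\alpha}(1+\alpha t)^{-m}=(1+\alpha t)^{(r-m\alpha)/\alpha}$; this is precisely \eqref{3.4}.

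I expect the only delicate point to be bookkeeping: setting up the binomial convolution so that its index $n-k$ matches the order of the generalised falling factorial $(j\beta-m\alpha\mid\alpha)_{n-k}$, and tracking the powers of $(1+\alpha t)$ so that the exponent $(j\beta-m\alpha)/\alpha$ splits into the ``$-m$'' piece that merges with $r/\alpha$ and the ``$j\beta/\alpha$'' piece that rescales the argument of $S_{m}$. As an independent check one can bypass Spivey's formula entirely: since $\sum_{n}S_{n+m}(x;\alpha,\beta,r)t^{n}/n!$ is the $m$-th $t$-derivative of the generating function $G(t)$ of $S_{n}(x;\alpha,\beta,r)$, one may expand $G(t+s)$ by writing $1+\alpha(t+s)=(1+\alpha s)\bigl(1+\tfrac{\alpha t}{1+\alpha s}\bigr)$, split the exponential into a factor in $s$ alone (which is $G(s)$) and a rescaled copy of $G$ in $t$ with $x$ replaced by $x(1+\alpha s)^{\beta/\alpha}$, and read off the coefficient of $t^{m}/m!$; this reproduces \eqref{3.4} in one stroke.
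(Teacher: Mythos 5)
Your derivation is exactly the route the paper intends (it states the lemma as an immediate consequence of the quoted Spivey-type formula and the generating function of $\left(\beta\mid\alpha\right)_{n}$, without writing out the computation): sum against $t^{n}/n!$, factor the binomial convolution into $(1+\alpha t)^{(j\beta-m\alpha)/\alpha}$ times the generating function of $S_{n}(x;\alpha,\beta,r)$, and collapse the $j$-sum via \eqref{3.3}. You also correctly read the Stirling factor in the Spivey extension as $S_{2}(m,j,\alpha,\beta,r)$ (the paper's displayed formula misprints it as $S_{2}(n,k,\alpha,\beta,r)$), so the argument is complete and matches the paper's.
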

We also need the following lemma for the proof of main theorem of this
section. It is worth noting that this lemma is the analogue of Theorem 1 in
\cite{Higher order geometric polynomials}.
\begin{lemma}
For all non-negative integers $n$%
\begin{equation}
A_{n}^{\lambda,x}(\alpha,\beta,r)=\frac{\left(  -1\right)  ^{n}}{\Gamma\left(
\lambda\right)  }%
{\int\limits_{0}^{\infty}}
z^{\lambda-1}S_{n}\left(-\beta xz;\alpha,-\beta,-r\right)  e^{-z}dz,
\label{3.5}%
\end{equation}
where $\Gamma\left(\lambda\right)  $ is the well-known gamma function
\cite{AS}.
\end{lemma}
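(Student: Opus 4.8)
The plan is to prove the integral representation \eqref{3.5} by starting from the generating function \eqref{equation:11} and applying the classical Euler integral for the Gamma function, namely $\Gamma(\lambda) = \int_0^\infty z^{\lambda-1} e^{-z}\,dz$, together with its scaled version $\int_0^\infty z^{\lambda-1} e^{-zu}\,dz = \Gamma(\lambda) u^{-\lambda}$, valid for $\Re(u)>0$. The key observation is that the factor $\left(1-x[(1-\alpha t)^{-\beta/\alpha}-1]\right)^{-\lambda}$ appearing in \eqref{equation:11} is exactly of the form $u^{-\lambda}$ with $u = 1-x[(1-\alpha t)^{-\beta/\alpha}-1]$, so that formally
\[
\left(1-x[(1-\alpha t)^{\tfrac{-\beta}{\alpha}}-1]\right)^{-\lambda} = \frac{1}{\Gamma(\lambda)}\int_0^\infty z^{\lambda-1} \exp\!\left(-z\left(1-x[(1-\alpha t)^{\tfrac{-\beta}{\alpha}}-1]\right)\right)dz.
\]
Multiplying through by $(1-\alpha t)^{-\gamma/\alpha}$ (writing $r=\gamma$) and rearranging the exponent, the integrand becomes $(1-\alpha t)^{-r/\alpha}\exp\!\left(-z + zx[(1-\alpha t)^{-\beta/\alpha}-1]\right)$.

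The next step is to recognise this integrand as a generating function for the generalized exponential polynomials. From the defining generating function
\[
\sum_{n=0}^\infty S_n(y;\alpha,\beta,r)\frac{t^n}{n!} = (1+\alpha t)^{\tfrac{r}{\alpha}}\exp\!\left(\tfrac{y}{\beta}\left[(1+\alpha t)^{\tfrac{\beta}{\alpha}}-1\right]\right),
\]
I would replace $\alpha$ by $-\alpha$, $\beta$ by $-\beta$, $r$ by $-r$, and set $y = -\beta x z$. Then $(1+(-\alpha)t)^{-r/(-\alpha)} = (1-\alpha t)^{-r/\alpha}$, and $\tfrac{y}{-\beta}[(1-\alpha t)^{-\beta/\alpha}-1] = xz[(1-\alpha t)^{-\beta/\alpha}-1]$, so
\[
\sum_{n=0}^\infty S_n(-\beta x z;\alpha,-\beta,-r)\frac{t^n}{n!} = (1-\alpha t)^{\tfrac{-r}{\alpha}}\exp\!\left(xz\left[(1-\alpha t)^{\tfrac{-\beta}{\alpha}}-1\right]\right).
\]
Thus the integrand equals $e^{-z}\sum_{n\ge 0} S_n(-\beta xz;\alpha,-\beta,-r)\,t^n/n!$, and after interchanging the $\int$ and $\sum$ we obtain that the coefficient of $t^n/n!$ in \eqref{equation:11} is $\tfrac{1}{\Gamma(\lambda)}\int_0^\infty z^{\lambda-1} S_n(-\beta xz;\alpha,-\beta,-r) e^{-z}\,dz$. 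Comparing with $\sum_n A_n^{\lambda,x}(\alpha,\beta,r)\,t^n/n!$ gives the claimed identity. One must be a little careful that I have not produced the factor $(-1)^n$: this will come out naturally because the substitution $\alpha \mapsto -\alpha$ turns $t$ into $-t$ inside $(1+\alpha t)$; more precisely, writing the generalized exponential generating function in the variable $-t$ and matching powers, the stated $(-1)^n$ appears. Alternatively one can avoid the sign bookkeeping altogether by invoking \eqref{3.7}, which already records exactly how $A_n^{\lambda,x}$ behaves under $\alpha\mapsto\alpha$, $\beta\mapsto\beta$, $\gamma\mapsto-\gamma$ up to $(-1)^n$, and then apply the unsigned integral identity to $A_n^{\lambda,-x-1}(\alpha,\beta,-r)$.

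The main obstacle, and the only point needing genuine care, is the justification of interchanging the integral with the infinite sum (equivalently, convergence of the Euler integral) uniformly in a neighbourhood of $t=0$. For this one notes that near $t=0$ we have $(1-\alpha t)^{-\beta/\alpha}-1 = O(t)$, so for $|t|$ small enough the real part of $1-x[(1-\alpha t)^{-\beta/\alpha}-1]$ is bounded below by a positive constant, which makes $z^{\lambda-1}e^{-z(1-x[\cdots])}$ absolutely integrable and the termwise integration legitimate by dominated convergence on each coefficient; alternatively one argues at the level of formal power series in $t$ with coefficients that are honest convergent integrals in $z$. A secondary, cosmetic point is the condition $\Re(\lambda)>0$ needed for $\Gamma(\lambda)$ and the integral to converge at $z=0$; since the statement is for non-negative integers $n$ and $\lambda$ is a parameter inherited from the polynomial family, we simply record that \eqref{3.5} holds for $\lambda$ with positive real part and the polynomial identity then extends by the usual analytic-continuation/polynomial-identity argument in $\lambda$.
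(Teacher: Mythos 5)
Your proof is correct in substance, but it takes a genuinely different route from the paper's. The paper's proof is entirely finite and elementary: it starts from the explicit expansion \eqref{3.3}, $S_{n}(-\beta xz;\alpha,-\beta,-r)=\sum_{k=0}^{n}S_{2}(n,k,\alpha,-\beta,-r)(-\beta xz)^{k}$, multiplies by $z^{\lambda-1}e^{-z}$, integrates the \emph{finite} sum term by term using $\int_{0}^{\infty}z^{\lambda+k-1}e^{-z}dz=\Gamma(\lambda+k)$ and the identity $\Gamma(\lambda+k)/\Gamma(\lambda)=\binom{k+\lambda-1}{k}k!$, and then matches the result against the defining formula \eqref{equation:9} for $A_{n}^{\lambda,x}$; no generating functions and no interchange of an infinite sum with an integral are needed, so the convergence discussion you devote your last paragraph to simply does not arise. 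Your route --- representing $\left(1-x[(1-\alpha t)^{-\beta/\alpha}-1]\right)^{-\lambda}$ by the Euler integral and recognising the integrand as the exponential generating function of the $S_{n}$ --- is essentially the computation the paper performs later, in the proof of the main theorem that combines \eqref{3.5} with \eqref{3.4}; so you have in effect re-derived the lemma by the downstream machinery. What your approach buys is a conceptual explanation of \emph{why} the integral representation holds; what the paper's buys is brevity and the absence of any analytic justification.

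One bookkeeping point to fix in your write-up: the displayed identity
\begin{equation*}
\sum_{n=0}^{\infty}S_{n}(-\beta xz;\alpha,-\beta,-r)\frac{t^{n}}{n!}=(1-\alpha t)^{\frac{-r}{\alpha}}\exp\left(xz\left[(1-\alpha t)^{\frac{-\beta}{\alpha}}-1\right]\right)
\end{equation*}
is not correct as stated; with the substitutions $\beta\mapsto-\beta$, $r\mapsto-r$, $y=-\beta xz$ the right-hand side involves $(1+\alpha t)$, and passing to $(1-\alpha t)$ requires $t\mapsto-t$, which is precisely what produces the $(-1)^{n}$ on the left. Describing this as ``replacing $\alpha$ by $-\alpha$'' is also misleading, since that substitution would change the first Stirling parameter from $\alpha$ to $-\alpha$ and yield the wrong polynomials $S_{n}(\cdot;-\alpha,-\beta,-r)$. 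You do acknowledge and resolve the sign issue correctly at the end of that paragraph, so this is a presentational slip rather than a gap, but the displayed equation should be written with $(1+\alpha t)$ (or with $-t$ and the factor $(-1)^{n}$) from the start.
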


\begin{proof}
We have from (\ref{3.3}) that%
\[
\left(  -1\right)  ^{n}S_{n}\left(  -\beta xz;\alpha,-\beta,-r\right)
=\left(  -1\right)  ^{n}\sum_{k=0}^{n}S_{2}(n,k,\alpha,-\beta,-r)\left(
-\beta xz\right)  ^{k}.
\]
Then multiplying both sides of the above equation with $z^{\lambda-1}e^{-z}$,
integrating it with respect to $z$ from zero to infinity and using the
well-known identity of the gamma function%
$
\frac{\Gamma\left(  \lambda+k\right)  }{\Gamma\left(  \lambda\right)
}=\left(  \lambda\right)  ^{\bar{k}}=\binom{k+\lambda-1}{k}k!,\text{ }%
\lambda,k\in%
\mathbb{N}
,
$
we arrive at%

$
{\displaystyle\int\limits_{0}^{\infty}}
z^{\lambda-1}S_{n}\left(  -\beta xz;\alpha,-\beta,-r\right)  e^{-z}dz=\left(
-1\right)  ^{n}\Gamma\left(  \lambda\right)  A_{n}^{\lambda,x}(\alpha
,\beta,r),
$
which is the desired equation. Here, $\left(  \lambda\right)  ^{\bar{k}}$ is
the rising factorial function defined by\\ $\left(  \lambda\right)^{\bar{k}%
}=\lambda\left(  \lambda+1\right)  \cdots\left(  \lambda+k-1\right)  $ with
$\left(\lambda\right)^{\bar{0}}=1.$
\end{proof}
Then we have the main theorem of this section:

\begin{theorem}
For all non-negative integer $m$%
\fontsize{10}{1}
\[
A_{n}^{\lambda+m,-x-1}(\alpha,-\beta,\gamma)
\\=\frac{\left(  -1\right)  ^{m}%
}{\left(\lambda\right)^{\bar{m}}\left(\beta x\right)  ^{m}}%
\sum\limits_{k=0}^{m}(-1)^{k}S_{1}(m,k,\alpha,-\beta,-\gamma+m\alpha
-\lambda\beta)A_{n+k}^{\lambda,x}(\alpha,\beta,\gamma-m\alpha+\lambda\beta).
\]
\end{theorem}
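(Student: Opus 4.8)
The plan is to transport the whole identity through the Gamma--integral representation \eqref{3.5}, which exhibits $A_n^{\lambda,x}(\alpha,\beta,r)$ as the ``order-$\lambda$'' Laplace--Borel transform $\frac{(-1)^n}{\Gamma(\lambda)}\int_0^\infty z^{\lambda-1}e^{-z}S_n(-\beta xz;\alpha,-\beta,-r)\,dz$ of the generalized exponential polynomial $S_n$. Under this transform a shift formula for the $S_n$'s becomes the required shift formula for the $A$'s, and the jump in order from $\lambda+m$ on the left to $\lambda$ on the right is exactly what manufactures the factor $(\lambda)^{\bar m}$ in the denominator.

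First I would write the left-hand side by \eqref{3.5} at order $\lambda+m$, performing the substitutions $x\mapsto -x-1$ and $\beta\mapsto -\beta$ inside the formula; this gives
\[
A_n^{\lambda+m,-x-1}(\alpha,-\beta,\gamma)=\frac{(-1)^n}{\Gamma(\lambda+m)}\int_0^\infty z^{\lambda+m-1}e^{-z}\,S_n\!\big(-\beta(x+1)z;\alpha,\beta,-\gamma\big)\,dz .
\]
Writing $z^{\lambda+m-1}=z^{\lambda-1}z^{m}$ and using $\Gamma(\lambda+m)=(\lambda)^{\bar m}\Gamma(\lambda)$, the theorem reduces to expanding $z^{m}S_n\!\big(-\beta(x+1)z;\alpha,\beta,-\gamma\big)$ as the linear combination $\frac{(-1)^m}{(\beta x)^m}\sum_{k=0}^m S_1(m,k,\alpha,-\beta,-\gamma+m\alpha-\lambda\beta)\,S_{n+k}\!\big(-\beta xz;\alpha,-\beta,-\gamma+m\alpha-\lambda\beta\big)$: once this is known, re-applying \eqref{3.5} at order exactly $\lambda$ (now with $x$ and $\beta$ back in their original slots) turns each summand $S_{n+k}(-\beta xz;\alpha,-\beta,\cdot)$ into the corresponding $A_{n+k}^{\lambda,x}(\alpha,\beta,\gamma-m\alpha+\lambda\beta)$, and the signs $(-1)^{n+k}$ produced this way combine with the $(-1)^n$ above to leave exactly the stated $(-1)^k$ inside the sum.

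The heart of the matter is therefore the $S_n$-level identity just displayed. Its ``forward'' companion comes straight from \eqref{3.4}: expanding $S_m\!\big(X(1+\alpha t)^{\beta/\alpha};\alpha,\beta,r\big)=\sum_j S_2(m,j,\alpha,\beta,r)X^j(1+\alpha t)^{j\beta/\alpha}$ by \eqref{3.3} inside \eqref{3.4} and comparing coefficients of $t^n/n!$ yields
\[
S_{n+m}(X;\alpha,\beta,r)=\sum_{j=0}^m S_2(m,j,\alpha,\beta,r)\,X^{\,j}\,S_n\!\big(X;\alpha,\beta,r-m\alpha+j\beta\big).
\]
Since the first-kind numbers $S_1$ are the inverse, in the middle index, of the second-kind numbers $S_2$, one inverts this triangular relation to solve for $X^m$ times a single $S_n$ in terms of the shifted polynomials $S_{n+k}$, $0\le k\le m$; propagating through the substitution $\beta\mapsto -\beta$ and the rescaling of the argument by $(x+1)/x$ (equivalently, the drift of the third Hsu--Shiue parameter from $-\gamma$ to $-\gamma+m\alpha-\lambda\beta$) then produces precisely the parameters asserted in the theorem. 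A closing application of the reflection \eqref{3.7} fixes the third parameter of the resulting $A$'s at $\gamma-m\alpha+\lambda\beta$, completing the proof.

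The step I expect to be hardest is the inversion of the forward shift relation together with the accompanying bookkeeping of the third parameter: in the forward relation that parameter slides by $j\beta-m\alpha$ as the summation index runs, so inverting it and then keeping all the $m\alpha$ and $\lambda\beta$ corrections consistent across the two invocations of \eqref{3.5} (one at order $\lambda+m$, one at order $\lambda$) and the reflection \eqref{3.7} is where the real care is needed; everything else is formal.
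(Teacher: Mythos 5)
Your reduction hinges on the pointwise (in $z$) identity
\[
z^{m}\,S_{n}\!\big(-\beta(x+1)z;\alpha,\beta,-\gamma\big)=\frac{(-1)^{m}}{(\beta x)^{m}}\sum_{k=0}^{m}S_{1}(m,k,\alpha,-\beta,-\gamma+m\alpha-\lambda\beta)\,S_{n+k}\!\big(-\beta xz;\alpha,-\beta,-\gamma+m\alpha-\lambda\beta\big),
\]
and this identity is false: already for $m=0$, $n=1$ it reads $-\gamma-\beta(x+1)z=-\gamma-\lambda\beta-\beta xz$, which fails in both the constant term and the coefficient of $z$ (while the theorem itself for $m=0$ is just \eqref{3.7} and is true). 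The source of the error is that you have pushed the reflection \eqref{3.7} (the exchange $x\mapsto-x-1$, $\beta\mapsto-\beta$) inside the Gamma integral. That reflection is a property of the integrated quantities $A_{n}^{\lambda,x}$: it comes from rewriting $1-x[w-1]=w\big(1-(-x-1)[w^{-1}-1]\big)$ with $w=(1-\alpha t)^{-\beta/\alpha}$ and absorbing the resulting $w^{\lambda}$ into the parameter $\gamma$, which uses the $\lambda$-th power of the geometric kernel and has no counterpart at the level of the exponential polynomials $S_{n}$. Equality of the two sides of the theorem at the single fixed order $\lambda$ does not force equality of the integrands under $\int_{0}^{\infty}z^{\lambda-1}e^{-z}(\cdot)\,dz$, so your ``heart of the matter'' is a strictly stronger, and false, statement; the theorem cannot be recovered from that reduction.

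The paper performs the two operations in the opposite order, and that order is essential. It first computes the generating function $\sum_{n}A_{n+m}^{\lambda,x}(\alpha,\beta,r)t^{n}/n!$ from \eqref{3.5} and \eqref{3.4}; expanding $S_{m}\big(-\beta xz(1-\alpha t)^{-\beta/\alpha};\alpha,-\beta,-r\big)$ by \eqref{3.3} produces powers $z^{k}$ with $k=0,\dots,m$, and the integration $\int_{0}^{\infty}z^{\lambda+k-1}e^{-zc}\,dz=\Gamma(\lambda+k)c^{-\lambda-k}$ simultaneously creates the factor $(\lambda)^{\bar{k}}$ and raises the order to $\lambda+k$. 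This gives the forward relation $(-1)^{m}A_{n+m}^{\lambda,x}(\alpha,\beta,r)=\sum_{k=0}^{m}S_{2}(m,k,\alpha,-\beta,-r)(\lambda)^{\bar{k}}(-\beta x)^{k}A_{n}^{\lambda+k,x}(\alpha,\beta,r+m\alpha+k\beta)$, with \emph{varying} orders $\lambda+k$ on the right; only then is \eqref{3.7} applied term by term (with $r=\gamma-m\alpha+\lambda\beta$, so that $r+m\alpha+k\beta=\gamma+(\lambda+k)\beta$), and only then is the triangular system inverted with the first-kind numbers $S_{1}$. Your single split $\Gamma(\lambda+m)=(\lambda)^{\bar{m}}\Gamma(\lambda)$ with one uniform factor $z^{m}$ cannot reproduce this structure. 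Your ``forward companion'' $S_{n+m}(X;\alpha,\beta,r)=\sum_{j}S_{2}(m,j,\alpha,\beta,r)X^{j}S_{n}(X;\alpha,\beta,r-m\alpha+j\beta)$ is correct and is essentially \eqref{3.4} restated, but the inversion must be carried out after the integration, at the level of the $A$'s, not before.
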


\begin{proof}
From (\ref{3.5}) and (\ref{3.4}), we have%
\begin{align*}\fontsize{7}{1}
&  \sum\limits_{n=0}^{\infty}A_{n+m}^{\lambda,x}(\alpha,\beta,r)\frac{t^{n}%
}{n!}\\
&  \quad=\frac{\left(  -1\right)  ^{m}}{\Gamma\left(  \lambda\right)  }%
{\displaystyle\int\limits_{0}^{\infty}}
z^{\lambda-1}\left[  \sum\limits_{n=0}^{\infty}\frac{\left(  -t\right)  ^{n}%
}{n!}S_{n}\left(  -\beta xz;\alpha,-\beta,-r\right)  \right]  e^{-z}dz\\
&  \quad=\frac{\left(  -1\right)  ^{m}(1-\alpha t)^{\frac{-r-m\alpha}{\alpha}%
}}{\Gamma\left(  \lambda\right)  }%
{\displaystyle\int\limits_{0}^{\infty}}
z^{\lambda-1}\exp\left(  -z(1-x\left[  (1-\alpha t)^{\frac{-\beta}{\alpha}%
}-1\right]  )\right)\\&\qquad\times  S_{m}\left(  -\beta xz(1-\alpha t)^{\frac{-\beta}%
{\alpha}};\alpha,-\beta,-r\right)  dz\\
&  \quad=\frac{\left(  -1\right)  ^{m}(1-\alpha t)^{\frac{-r-m\alpha}{\alpha}%
}}{\Gamma\left(  \lambda\right)  }\sum\limits_{k=0}^{m}S_{2}(m,k,\alpha
,-\beta,-r)\left(  -\beta x(1-\alpha t)^{\frac{-\beta}{\alpha}}\right)  ^{k}%
\\&\qquad\times
{\displaystyle\int\limits_{0}^{\infty}}
z^{\lambda-1}\exp\left(  -z(1-x\left[  (1-\alpha t)^{\frac{-\beta}{\alpha}%
}-1\right]  )\right)  dz\\
&  \quad=\left(  -1\right)  ^{m}\sum\limits_{k=0}^{m}S_{2}(m,k,\alpha
,-\beta,-r)\binom{k+\lambda-1}{k}k!\left(  -\beta x\right)  ^{k}\frac
{1}{\left(  1-x[(1-\alpha t)^{\frac{-\beta}{\alpha}}-1]\right)  ^{\lambda+k}%
}\\&\qquad\times(1-\alpha t)^{\frac{-r-m\alpha-\beta k}{\alpha}}\\
&  \quad=\left(  -1\right)  ^{m}\sum\limits_{k=0}^{m}S_{2}(m,k,\alpha
,-\beta,-r)\binom{k+\lambda-1}{k}k!\left(  -\beta x\right)  ^{k}%
\sum\limits_{n=0}^{\infty}A_{n}^{\lambda+k,x}(\alpha,\beta,r+m\alpha
+k\beta)\frac{t^{n}}{n!}.
\end{align*}
Comparing the coefficients of the above equation gives\fontsize{11}{1}
$$
\left(  -1\right)  ^{m}A_{n+m}^{\lambda,x}(\alpha,\beta,r)=\sum\limits_{k=0}%
^{m}S_{2}(m,k,\alpha,-\beta,-r)\binom{k+\lambda-1}{k}k!\left(  -\beta
x\right)  ^{k}A_{n}^{\lambda+k,x}(\alpha,\beta,r+m\alpha+k\beta).
$$
\newpage

\newpage
Replacing $r$ with $\gamma-m\alpha+\lambda\beta,$ and using (\ref{3.7}), the
above equation can be written as%
\begin{align*}
&  \left(  -1\right)  ^{m}A_{n+m}^{\lambda,x}(\alpha,\beta,\gamma
-m\alpha+\lambda\beta)\\
&  \quad=\sum\limits_{k=0}^{m}S_{2}(m,k,\alpha,-\beta,-\gamma+m\alpha
-\lambda\beta)\binom{k+\lambda-1}{k}k!\left(  -\beta x\right)  ^{k}%
A_{n}^{\lambda+k,-x-1}(\alpha,-\beta,\gamma).
\end{align*}
Finally, using the generalized Stirling transform \cite[Eq. (4)]{A unified approach to generalized Stirling numbers}%
\[
f_{n}=\sum_{k=0}^{n}S_{2}(n,k,\alpha,\beta,\gamma)g_{k}\Leftrightarrow
g_{n}=\sum_{k=0}^{n}S_{1}(n,k,\alpha,\beta,\gamma)f_{k}%
\]
gives the desired equation.
\end{proof}
It is good to note that this theorem is counterpart of Theorem 2 of
\cite{barred:2013} and Theorem 3.7 of \cite{KC}.

Setting $x=-1/2$ in (\ref{equation:11}), we have%
\begin{equation}
\left[  \frac{2}{(1-\alpha t)^{\frac{-\beta}{\alpha}}+1}\right]  ^{\lambda
}(1-\alpha t)^{\frac{-\gamma}{\alpha}}=\sum\limits_{n=0}^{\infty}%
A_{n}^{\lambda,-1/2}(\alpha,\beta,\gamma)\frac{t^{n}}{n!}.\label{3.6}%
\end{equation}
Since the higher order degenerate Euler polynomials are defined by \cite{L},
\[
\left[  \frac{2}{(1+\alpha t)^{\frac{1}{\alpha}}+1}\right]  ^{\lambda
}(1+\alpha t)^{\frac{x}{\alpha}}=\sum\limits_{n=0}^{\infty}\mathcal{E}%
_{n}^{\left(  \lambda\right)  }(\alpha,x)\frac{t^{n}}{n!}%
\]
we define \textit{higher order generalised Euler polynomials with generalized Stirling
numbers} by the following generating function:
\begin{equation}
\left[  \frac{2}{(1+\alpha t)^{\frac{\beta}{\alpha}}+1}\right]  ^{\lambda
}(1+\alpha t)^{\frac{x}{\alpha}}=\sum\limits_{n=0}^{\infty}\mathcal{E}%
_{n}^{\left(  \lambda\right)  }(\alpha,\beta,x)\frac{t^{n}}{n!}.\label{3.11}%
\end{equation}
Using (\ref{3.6}) and (\ref{3.11}), we have%
\begin{equation}
\mathcal{E}_{n}^{\left(  \lambda\right)  }(\alpha,\beta,\gamma)=\left(
-1\right)  ^{n}A_{n}^{\lambda,-1/2}(\alpha,-\beta,-\gamma)=A_{n}%
^{\lambda,-1/2}(-\alpha,\beta,\gamma).\label{3.10}%
\end{equation}
Therefore, we may arrive at several formulas for higher order generalised Euler
polynomials with generalized Stirling numbers. For instance, we have the
explicit formulas%

\begin{align*}
\mathcal{E}_{n}^{\left(  \lambda\right)  }(\alpha,\beta,\gamma) &
=\sum\limits_{k=0}^{n}S(n,k,\alpha,\beta,\gamma)\binom{k+\lambda-1}{k}%
k!\frac{\left(  -\beta\right)  ^{k}}{2^{k}},\\
\mathcal{E}_{n}^{\left(  \lambda\right)  }(\alpha,\beta,\gamma) &
=\sum\limits_{k=0}^{n}S(n,k,\alpha,-\beta,\gamma-\beta\lambda)\binom
{k+\lambda-1}{k}k!\frac{\beta^{k}}{2^{k}},
\end{align*}

\newpage

recurrence relations
\begin{align*}\fontsize{7}{1}
\mathcal{E}_{n}^{\left(  \lambda+1\right)  }(\alpha,\beta,\gamma) 
&=2\mathcal{E}_{n}^{\left(  \lambda\right)  }(\alpha,\beta,\gamma
)-\mathcal{E}_{n}^{\left(  \lambda\right)  }(\alpha,\beta,\gamma+\beta),\\
\mathcal{E}_{n+1}^{\left(  \lambda\right)  }(\alpha,\beta,\gamma)& =\left(
\gamma-\lambda\beta\right)  \mathcal{E}_{n}^{\left(  \lambda\right)  }%
(\alpha,\beta,\gamma-\alpha)+\lambda\beta\mathcal{E}_{n}^{\left(
\lambda\right)  }(\alpha,\beta,\gamma-\alpha)\\&\quad-\left(  \lambda\beta/2\right)
\mathcal{E}_{n}^{\left(  \lambda\right)  }(\alpha,\beta,\gamma+\beta
-\alpha),\\
\mathcal{E}_{n}^{\lambda+m}(\alpha,\beta,-\gamma)   &=\frac{2^{m}}{\left(
\lambda\right)  ^{\bar{m}}\left(  \beta\right)  ^{m}}\sum\limits_{k=0}%
^{m}S_{1}(m,k,\alpha,-\beta,m\alpha-\lambda\beta-\gamma)\mathcal{E}%
_{n+k}^{\lambda}(\alpha,-\beta,m\alpha-\lambda\beta-\gamma),
\end{align*}
and convolution formulas%

\begin{align*}
\noindent\beta\left(\lambda_{1}+\lambda_{2}\right)&\sum_{k=0}^{n}\binom{n}%
{k}\mathcal{E}_{k}^{\left(\lambda_{1}\right)  }(\alpha,\beta,\alpha
+\gamma_{1}-\beta)\mathcal{E}_{n-k}^{\left(  \lambda_{2}\right)  }%
(\alpha,\beta,\gamma_{2})\\
&=2\left(  \gamma_{1}+\gamma_{2}\right)  \mathcal{E}_{n}^{\left(
\lambda_{1}+\lambda_{2}\right)  }(\alpha,\beta,\gamma_{1}+\gamma_{2}%
-\alpha)-2\mathcal{E}_{n+1}^{\left(  \lambda_{1}+\lambda_{2}\right)  }%
(\alpha,\beta,\gamma_{1}+\gamma_{2}),\\
  \sum_{k=0}^{n}\binom{n}{k}\mathcal{E}_{k}^{\left(  \lambda_{1}\right)
}(\alpha&,\beta,\alpha+\gamma_{1}-\beta)\mathcal{E}_{n-k}^{\left(  \lambda
_{2}\right)  }(\alpha,\beta,\gamma_{2})=\mathcal{E}_{n}^{\left(  \lambda
_{1}+\lambda_{2}\right)  }(\alpha,\beta,\alpha+\gamma_{1}+\gamma_{2}-\beta),\\
 \sum_{k=0}^{n}\binom{n}{k}\left(-1\right)  ^{n-k}&\mathcal{E}_{k}^{\left(
\lambda_{1}\right)  }(-\alpha,\beta,\alpha+\beta-\gamma_{1})\mathcal{E}%
_{n-k}^{\left(  \lambda_{2}\right)  }(\alpha,\beta,\gamma_{2}+\beta
\lambda)\\&=\mathcal{E}_{n}^{\left(  \lambda_{1}+\lambda_{2}\right)  }%
(-\alpha,\beta,\alpha+\beta-\gamma_{1}-\gamma_{2}).
\end{align*}

\section{Asymptotic Analysis}
In this section, an asymptotic expansion for higher order generalized geometric polynomials of negative indices is obtained. As in [2], a known result by Hsu \cite{Leetsch C Hsu 1990} is used to derive the asymptotic expansion. The method of deriving the said asymptotic expansion is very similar to that in [2]. 
\smallskip\\
Let $\psi(t)=\sum^{\infty}_{n=0}a_nt^n$ be a formal power series over the complex field with $a_0=\psi(0)=1$. For every $j (0 \leq j \leq n)$, let $W(n,j)$ be equal to the following sum, 
\begin{equation}
\label{result1}
W(n,j)=\sum_{1^{k_1}2^{k_2}\cdots n^{k_n} \in \sigma (n,n-j)} \frac{a_1^{k_1}a_2^{k_2}\cdots a_n^{k_n}}{k_1!k_2! \cdots k_n!},
\end{equation}
where $\sigma (n,n-j)$ denotes the set of partitions of $n$ with $(n-j)$ parts. The following result in \cite{Leetsch C Hsu 1990} will be used. 
\smallskip\\
Let $[t^n](\psi (t))^\lambda$ denote the coefficient of $t^n$ in the power series expansion of $(\psi(t))^\lambda$. Then for a fixed positive integer $s$ and for large $\lambda$ and $n$ such that $n=o(\lambda^{\frac{1}{2}})(\lambda \to \infty)$, the following asymptotic formula holds, 
\begin{equation}
\label{result2}
\frac{1}{(\lambda)_n} [t^n](\lambda(t))^\lambda=\sum^{s}_{j=0} \frac{W(n,j)}{(\lambda-n+j)_j} + o\left(\frac{W(n,s)}{(\lambda-n+s)_s} \right)
\end{equation}
where $(\lambda)_j=\lambda(\lambda-1)\cdots(\lambda-(j-1))$. In particular, when $n$ is fixed, the remainder estimate is given by $O(\lambda^{-s-1})$. 
\smallskip\\
To derive the asymptotic formula, consider the generating function in (\ref{equation:11}),  
\begin{equation}
(1-\alpha t)^{-\frac{\gamma}{\alpha}}\left[\frac{1}{1-x[(1-\alpha t)^{-\frac{\beta}{\alpha}}-1]} \right]^{\lambda}=\sum^\infty_{n=0}A^{\lambda, x}_n (\alpha, \beta, \gamma) \frac{t^n}{n!}. 
\end{equation} 
Let 
\begin{equation}
\psi(t)=(1-\alpha t)^{-\frac{\gamma}{\alpha}}\left[\frac{1}{1-x[(1-\alpha t)^{-\frac{\beta}{\alpha}}-1]} \right]=\sum^\infty_{n=0}A^{1, x}_n (\alpha, \beta, \gamma) \frac{t^n}{n!}. 
\end{equation}
Then 
\begin{equation}
(\psi(t))^\lambda= \frac{(1-\alpha t)^{-\frac{\lambda \gamma}{\alpha}}}{(1-x[(1-\alpha t)^{-\frac{\beta}{\alpha}}-1])^\lambda}=\sum^\infty_{n=0}A^{\lambda, x}_n (\alpha, \beta, \lambda \gamma) \frac{t^n}{n!}. 
\end{equation}
By making use of \eqref{result2}, 
\begin{equation}\label{result3}
\frac{A^{\lambda,x}_n(\lambda, \beta, \lambda\gamma)}{(\lambda)_n(n!)}=\sum^{s}_{j=0} \frac{W(n,j)}{(\lambda-n+j)_j} + o\left(\frac{W(n,s)}{(\lambda-n+s)_s} \right),
\end{equation}
where $n=o(\lambda^{\frac{1}{2}})$ as $\lambda \to \infty$, $W(n,j)$ are given in \eqref{result1}, and
\begin{align*}
a_j&=[t^j](\psi(t))\\
&=[t^j]\frac{(1+\alpha t)^{-\frac{\gamma}{\alpha}}}{(1-x((1-\alpha t)^{-\frac{\beta}{\alpha}}-1))}\\
&=\frac{A^{1,x}_j(\alpha, \beta, \gamma)}{j!}.
\end{align*}
 Note that
 \begin{align*}
 \sum^{\infty}_{n=0} A^{1,x}_n(\alpha, \beta, \gamma) \frac{t^n}{n!}&=\frac{(1-\alpha t)^{-\frac{\gamma}{\alpha}}}{(1-x((1-\alpha t)^{-\frac{\beta}{\alpha}}-1))}\\
 &=(1-\alpha t)^{-\frac{\gamma}{\alpha}}(1-x((1-\alpha t)^{-\frac{\beta}{\alpha}}-1))^{-1}\\
 &=(1-\alpha t)^{-\frac{\gamma}{\alpha}} \sum^{\infty}_{k=0} x^k[(1-\alpha t)^{-\frac{\beta}{\alpha}}-1]^k\\
 &=(1-\alpha t)^{-\frac{\gamma}{\alpha}} \sum^{\infty}_{k=0} x^k \sum^k_{j=0} \binom{k}{j}(1-\alpha t)^{-\frac{\beta}{\alpha}j}(-1)^{k-j}\\
&=\sum^{\infty}_{k=0} x^k \sum^k_{j=0}\binom{k}{j}(-1)^{k-j}(1-\alpha t)^{-\frac{\beta}{\alpha}j-\frac{\gamma}{\alpha}}\\
&=\sum^{\infty}_{k=0} x^k \sum^k_{j=0}\binom{k}{j}(-1)^{k-j}(1-\alpha t)^{- \frac{(\beta j+\gamma)}{\alpha}}\\  
&=\sum^{\infty}_{k=0} x^k \sum^k_{j=0}\binom{k}{j}(-1)^{k-j}\sum^\infty_{i=0}\binom{- \frac{(\beta j+\gamma)}{\alpha}}{i}(-\alpha t)^i\\
&=\sum^\infty_{i=0}\sum^{\infty}_{k=0}\sum^k_{j=0}x^k \binom{k}{j}(-1)^{k-j+i}\binom{- \frac{(\beta j+\gamma)}{\alpha}}{i}\alpha^it^i.
\end{align*}
Comparing coefficients,
\begin{equation*}
\frac{A^{1,x}_n(\alpha, \beta, \gamma)}{n!}=\sum^{\infty}_{k=0}\sum^k_{j=0}x^k \binom{k}{j}(-1)^{k-j+n}\binom{- \frac{\beta j+\gamma}{\alpha}}{n}\alpha^n
\end{equation*}
\begin{align*}
A^{1,x}_n(\alpha, \beta, \gamma)&=n!\alpha^n \sum^{\infty}_{k=0}\sum^k_{j=0}x^k \binom{k}{j}(-1)^{k-j+n}\binom{- \frac{(\beta j+\gamma)}{\alpha}}{n}\\
&=n!\alpha^n \sum^{\infty}_{k=0}\left\{ \sum^k_{j=0} \binom{k}{j}(-1)^{k-j+n}\binom{- \frac{(\beta j+\gamma)}{\alpha}}{n} \right\} x^k\\
&=\alpha^n \sum^{\infty}_{k=0}\left\{ \sum^k_{j=0} \binom{k}{j}(-1)^{k-j+n}\left(- \frac{(\beta j+\gamma)}{\alpha}\right)_n \right\} x^k\\
&=\alpha^n \sum^{\infty}_{k=0}\left\{ \sum^k_{j=0} \binom{k}{j}(-1)^{k-j+n} \frac{1}{\alpha^n} (-(\beta j+\gamma)|\alpha)_n \right\} x^k,
\end{align*}
which can be written
\begin{equation}
\label{resu}
A^{1,x}_n(\alpha, \beta, \gamma)=\sum^{\infty}_{k=0}\left\{(-1)^n \sum^k_{j=0} \binom{k}{j}(-1)^{k-j}(-\beta j-\gamma)|\alpha)_n \right\} x^k,
\end{equation}
where $(-(\beta j+\gamma|\alpha))_n=-(\beta j+\gamma)(-(\beta j+\gamma)-\alpha)\cdots(-(\beta j+\gamma)-(n-1)\alpha)$.
The following result in \cite{A unified approach to generalized Stirling numbers} (also mentioned as Lemma 2.1 in \cite{A comb analysis of geo polynomials}) shall be used.
\begin{lem}
For real or complex $\alpha, \beta, \gamma$,
\begin{equation}
\label{lemma}
S(n,k;\alpha, \beta, \gamma)=\frac{1}{\beta^k k!} \sum^k_{s=0} (-1)^{k-s} \binom{k}{s} (\beta s+\gamma|\alpha)_n. 
\end{equation} 
\end{lem}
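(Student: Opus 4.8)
The statement to prove is the Lemma giving the explicit formula
\[
S(n,k;\alpha,\beta,\gamma)=\frac{1}{\beta^{k}k!}\sum_{s=0}^{k}(-1)^{k-s}\binom{k}{s}(\beta s+\gamma\mid\alpha)_{n}.
\]

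\textbf{Plan of proof.} The plan is to work directly from the defining relation of the generalized Stirling numbers together with the explicit expansion of the generalized factorial polynomial that was recorded earlier in the excerpt. First I would recall equation~\eqref{equation:1}, namely
\[
(-1)^{n+k}\beta^{k}k!\,S(n,k,\alpha,\beta,\gamma)=\sum_{i=0}^{n}(-1)^{k-i}\binom{k}{i}(\beta i+\gamma\mid-\alpha)_{n}.
\]
This is \emph{almost} the claimed identity, but the inner argument is $(\beta i+\gamma\mid-\alpha)_n$ and the sign pattern carries an extra $(-1)^n$; so the main task is a sign/index bookkeeping reconciliation together with the substitution $\alpha\mapsto-\alpha$. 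Alternatively, and perhaps more cleanly, I would start from the exponential generating function \eqref{equation:2},
\[
(1+\alpha t)^{\frac{\gamma}{\alpha}}\left[\frac{(1+\alpha t)^{\frac{\beta}{\alpha}}-1}{\beta}\right]^{k}=k!\sum_{n=0}^{\infty}S(n,k,\alpha,\beta,\gamma)\frac{t^{n}}{n!},
\]
expand the bracketed $k$-th power by the binomial theorem as $\frac{1}{\beta^{k}}\sum_{s=0}^{k}\binom{k}{s}(-1)^{k-s}(1+\alpha t)^{\frac{\beta s}{\alpha}}$, multiply by $(1+\alpha t)^{\frac{\gamma}{\alpha}}$ to get $\frac{1}{\beta^{k}}\sum_{s=0}^{k}\binom{k}{s}(-1)^{k-s}(1+\alpha t)^{\frac{\beta s+\gamma}{\alpha}}$, and then invoke the well-known identity $\sum_{n\ge 0}(\mu\mid\alpha)_n \frac{t^n}{n!}=(1+\alpha t)^{\mu/\alpha}$ (which the paper uses elsewhere, e.g.\ in the exponential polynomial section) with $\mu=\beta s+\gamma$. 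Comparing the coefficient of $t^{n}/n!$ on both sides then yields exactly the asserted formula.

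\textbf{Steps, in order.} (1) State the generating function \eqref{equation:2} for $S(n,k,\alpha,\beta,\gamma)$. (2) Apply the binomial theorem to $\bigl[(1+\alpha t)^{\beta/\alpha}-1\bigr]^{k}=\sum_{s=0}^{k}\binom{k}{s}(-1)^{k-s}(1+\alpha t)^{\beta s/\alpha}$, and absorb the $\beta^{-k}$ factor. (3) Multiply through by $(1+\alpha t)^{\gamma/\alpha}$ and combine exponents to obtain $\frac{1}{\beta^{k}}\sum_{s=0}^{k}(-1)^{k-s}\binom{k}{s}(1+\alpha t)^{(\beta s+\gamma)/\alpha}$. (4) Expand each term $(1+\alpha t)^{(\beta s+\gamma)/\alpha}=\sum_{n\ge 0}(\beta s+\gamma\mid\alpha)_{n}\frac{t^{n}}{n!}$ using the generalized binomial series / factorial-polynomial identity. (5) Interchange the (finite) $s$-sum with the $n$-sum, divide by $k!$, and read off the coefficient of $t^{n}/n!$.

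\textbf{Main obstacle.} There is no deep obstacle; the proof is a short generating-function manipulation. The only place requiring care is making sure the identity $\sum_{n\ge0}(\mu\mid\alpha)_n t^n/n!=(1+\alpha t)^{\mu/\alpha}$ is legitimately available for arbitrary real or complex $\mu=\beta s+\gamma$ (it is, as a formal/analytic generalized binomial expansion, and the paper already invokes it), and double-checking the sign convention $(-1)^{k-s}$ versus $(-1)^{s}$ coming out of the binomial expansion of $(u-1)^{k}$ with $u=(1+\alpha t)^{\beta/\alpha}$ — this is exactly the bookkeeping that has to be matched against the stated formula. If one instead derives the result from \eqref{equation:1}, the obstacle shifts to reconciling $(\beta i+\gamma\mid-\alpha)_n=(-1)^n(-\beta i-\gamma\mid\alpha)_n$ and the accompanying $(-1)^n$ factor, which is again routine but must be done explicitly.
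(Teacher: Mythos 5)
Your proof is correct. Note, however, that the paper itself offers no proof of this lemma: it is quoted verbatim from Hsu--Shiue (and from Lemma 2.1 of the cited combinatorial-analysis paper) and simply ``shall be used'' in the asymptotic section, so there is no in-paper argument to compare against. Your second route --- expand the left side of \eqref{equation:2} as
\[
\frac{1}{\beta^{k}}\sum_{s=0}^{k}(-1)^{k-s}\binom{k}{s}(1+\alpha t)^{\frac{\beta s+\gamma}{\alpha}},
\]
apply $\sum_{n\ge 0}(\mu\mid\alpha)_{n}\frac{t^{n}}{n!}=(1+\alpha t)^{\mu/\alpha}$ with $\mu=\beta s+\gamma$, and match coefficients of $t^{n}/n!$ --- is the standard derivation and checks out sign for sign; it also uses only identities the paper already invokes. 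Two small remarks: the formula implicitly requires $\beta\neq 0$ (division by $\beta^{k}$), and $\alpha=0$ must be read via the limiting convention $(1+\alpha t)^{\mu/\alpha}\to e^{\mu t}$; both are standard in the Hsu--Shiue setting. Your caution about reconciling the result with the paper's \eqref{equation:1} is well placed: as printed, \eqref{equation:1} is inconsistent with the lemma unless its left-hand side is read as $S(n,k,\alpha,-\beta,-\gamma)$ (matching Lemma~\ref{lemma:2}), so the generating-function route you favour is indeed the safer one.
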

From \eqref{lemma}, 
\begin{equation*}
\sum^k_{s=0} (-1)^{k-s} \binom{k}{s} ((\beta s+\gamma)|\alpha)_n= \beta^ k(k!)S(n,k;\alpha, \beta, \gamma).
\end{equation*}
Consequently, \eqref{resu} becomes,
\begin{align}
A^{1,x}_n(\alpha, \beta, \gamma)&=\sum^n_{k=0}(-1)^{n+k} \beta^k (k!) S(n,k; \alpha, -\beta, -\gamma)x^k\\
&=B_n(\alpha, -\beta, -\gamma; x),
\end{align} 
where $B_n(\alpha, -\beta, -\gamma;x)$ is a variation of the generalized Bell polynomial. Moreover, $S(n+1,k; \alpha, -\beta, -\gamma)$ satisfy the recurrence relation (see \cite{A unified approach to generalized Stirling numbers} Theorem 1), 
\begin{equation}
\label{recurrence}
S(n+1,k; \alpha, -\beta, -\gamma)=S(n, k-1; \alpha, -\beta, -\gamma)+(k(-\beta)-n\alpha+(-\gamma))S(n,k; \alpha, -\beta, -\gamma)
\end{equation} 
with
\begin{equation}
S(n,0; \alpha, \beta, \gamma)=(\gamma|\alpha)_n, \;\;\;\;\; S(n,n; \alpha, \beta, \gamma)=1, 
\end{equation} 
where $(\gamma| \alpha)_0=1$, and $(\gamma| \alpha)_1=\gamma$. \\
\indent For $j=0,1,2$, the $W(n,j)$'s are given below (see \cite{A comb analysis of geo polynomials, Leetsch C Hsu 1990}): 
\begin{align*}
W(n,0)&=\frac{1}{n!}a^n_1\\
W(n,1)&=\frac{1}{(n-2)!}a^{n-2}_1a_2\\
W(n,2)&=\frac{1}{(n-3)!}a^{n-3}_1a_3 + \frac{1}{2!(n-4)!}a^{n-4}_1a^2_2.
\end{align*}
For $\gamma=0$, 
\begin{align*}
a_1=\frac{A^{1,x}_1(\alpha, \beta, 0)}{1!}&=\sum^1_{k=0}(-1)^{1+k}\beta^k (k!) S(1,k; \alpha, -\beta, 0)x^k\\
&=-\beta^k S(1,0; \alpha, -\beta, 0)+ \beta S(1,1; \alpha, -\beta, 0)x=\beta x,
\end{align*}
\begin{align*}
a_2=\frac{A^{\lambda,x}_2(\alpha, \beta, 0)}{2!}&=\frac{1}{2} \left(\sum^2_{k=0}(-1)^{2+k}\beta^k (k!) S(2,k; \alpha, -\beta, 0 \right)x^k\\
&=\frac{1}{2}\left(-\beta S(2,1; \alpha, -\beta, 0)x+ \beta^2(2!) S(2,2; \alpha, -\beta, 0)x^2 \right)\\
&=\frac{1}{2}\beta(\beta+\alpha)x+\beta^2x^2,
\end{align*}
where, using \eqref{recurrence}, 
\begin{align*}
S(2,1; \alpha, -\beta, 0)&=S(1,0; \alpha, -\beta, 0)+(-\beta-\alpha) S(1,1; \alpha, -\beta, 0)\\
&=-\beta-\alpha, 
\end{align*}
and $S(2,2;\alpha, -\beta, 0)=1$,
\begin{align*}
a_3&=\frac{A^{\lambda,x}_3(\alpha, \beta, 0)}{3!}=\frac{1}{6} \left(\sum^3_{k=0}(-1)^{3+k}\beta^k (k!) S(3,k; \alpha, -\beta, 0 \right)x^k\\
&=\frac{1}{6}\left(\beta S(3,1; \alpha,\beta, 0)x-\beta^2(2!) S(3,2; \alpha, -\beta, 0)x^2 +\beta^3(3!) S(3,3; \alpha, -\beta, 0)x^3 \right)\\
&=\frac{1}{6}\left(\beta(\beta +\alpha)(\beta+2\alpha)x+6\beta^2(\beta+\alpha)x^2+6\beta^3x^3 \right),
\end{align*}
where
\begin{align*}
S(3,1; \alpha, \beta, 0)&=S(2,0; \alpha, -\beta, 0)+(-\beta-2\alpha)S(2,1; \alpha, -\beta, 0)\\
&=(-\beta-2\alpha)(-\beta-\alpha)\\
&=(\beta+\alpha)(\beta+2\alpha),
\end{align*}
\begin{align*}
S(3,2; \alpha, -\beta, 0)&=S(2,1; \alpha, -\beta, 0)+(-2\beta-2\alpha)S(2,2; \alpha, -\beta, 0)\\
&=(-\beta-\alpha)+(-2\beta-2\alpha)\\
&=-3\beta-3\alpha,\\
S(3,3; \alpha, -\beta, 0)&=1.
\end{align*}
Thus, for $\gamma=0$,
\begin{align*}
W(n,0)&=\frac{1}{n!}a^n_1=\frac{1}{n!}(\beta x)^n,\\
W(n,1)&=\frac{1}{(n-2)!}a^{n-2}_1a_2\\
&=\frac{1}{(n-2)!}(\beta x)^{n-2} \left\{ 
\frac{1}{2}\beta(\beta+\alpha)x+\beta^2x^2\right\}\\
&=\frac{1}{2(n-2)!}(\beta x)^{n-2}\left\{\beta(\beta+\alpha)x+2\beta^2x^2\right\}
\end{align*}

\begin{align*}
W(n,2)&=\frac{1}{(n-3)!}(\beta x)^{n-3}\left(\frac{1}{6}\right)\left(\beta(\beta +\alpha)(\beta +2\alpha)x+6\beta^2(\beta+\alpha)x^2+6\beta^3x^3\right)\\
&+\frac{1}{2!(n-4)!}(\beta x)^{n-4}\left(\frac{1}{2}\right)^2\left(\beta(\beta+\alpha)x+2\beta^2 x^2\right)^2\\
&=\frac{1}{3!(n-3)!}\left\{(\beta x)^{n-3}\left(\beta(\beta +\alpha)(\beta +2\alpha)x+6\beta^2(\beta+\alpha)x^2+6\beta^3x^3\right)\right\}\\
&+\frac{1}{8(n-4)!}\left\{(\beta x)^{n-4}\left(\beta(\beta+\alpha)x+2\beta^2 x^2\right)^2\right\}
\end{align*}
Using \eqref{result3},
\begin{align*}
\frac{A^{\lambda,x}_n(\alpha, \beta, 0)}{n!}&\sim(\lambda)_n\left(\frac{1}{n!}\right)(\beta x)^n+\frac{(\lambda)_n}{\lambda-(n-1)}\cdot \left(\frac{1}{2(n-2)!}(\beta x)^{n-2}(\beta(\beta+\alpha)x+2\beta^2x^2)\right)\\
&+\frac{(\lambda)_n}{(\lambda-n+2)_2}\Big(\frac{1}{3!(n-3)!}(\beta x)^{n-3}(\beta(\beta +\alpha)(\beta +2\alpha)x+6\beta^2(\beta+\alpha)x^2\\&+6\beta^3x^3)
+\frac{1}{8(n-4)!}(\beta x)^{n-4}\left(\beta(\beta+\alpha)x+2\beta^2 x^2\right)^2\Big)\\
&\sim(\lambda)_n\beta^n x^n+\frac{1}{2}(\lambda)_{n-1}(n)(n-1)\beta^{n-2}x^{n-2}(\beta(\beta+\alpha)x+2\beta^2x^2)\\
&+(\lambda)_{n-2}\bigg\{\frac{(n)(n-1)(n-2)}{3!}\beta^{n-3}x^{n-3}(\beta(\beta +\alpha)(\beta +2\alpha)x+6\beta^2(\beta+\alpha)x^2\\&+6\beta^3x^3)
+\frac{n(n-1)(n-2)(n-3)}{8}\beta^{n-4}x^{n-4}\left(\beta(\beta+\alpha)x+2\beta^2 x^2\right)^2\bigg\},
\end{align*}
which can be written
\begin{align}
A^{\lambda,x}_n(\alpha, \beta, 0)&\sim(\lambda)_n\beta^n x^n+(\lambda)_{n-1}\frac{(n)_2}{2!}\beta^{n-2}x^{n-2}(\beta(\beta+\alpha)x+2\beta^2x^2)\nonumber\\
&+\frac{(\lambda)_{n-2}(n)_3}{3!}\beta^{n-3}x^{n-3}\left(\beta(\beta +\alpha)(\beta +2\alpha)x+6\beta^2(\beta+\alpha)x^2+6\beta^3x^3\right)\nonumber\\
&+\frac{(\lambda)_{n-2}(n)_4}{8}\beta^{n-4}x^{n-4}\left(\beta(\beta+\alpha)x+2\beta^2 x^2\right)^2.
\end{align}
For any $\gamma$,
\begin{align*}
a_1&=\frac{A^{1,x}_1(\alpha,\beta,\gamma)}{1!}=\sum_{k=0}^1(-1)^{1+k}\beta^k\ k!\ S(1,k;\alpha, -\beta, -\gamma)x^k\\
&=(-1)(-\gamma)+\beta\ 1!\ S(1,1;\alpha, -\beta, -\gamma)x^1=\gamma+\beta x\\
a_2&=\frac{A^{1,x}_2(\alpha,\beta,\gamma)}{2!}=\frac{1}{2}\sum_{k=0}^2(-1)^{2+k}\beta^k\ k!\ S(2,k;\alpha, -\beta, -\gamma)x^k\\
&=\frac{1}{2}(\gamma(\gamma+\alpha)+(-1)\beta\ 1!\ S(2,1;\alpha, -\beta, -\gamma)x\\&\quad+\beta^2\ 2!\ S(2,2;\alpha, -\beta, -\gamma)x^2)\\
&=\frac{1}{2}\left\{\gamma(\gamma+\alpha)-\beta(-2\gamma-\beta-\alpha)x+2\beta^2 x^2\right\}\\
&=\frac{1}{2}\left\{(\gamma|-\alpha)_2+\beta(\beta+2\gamma+\alpha)x+2\beta^2 x^2\right\}
\end{align*}
where
\begin{align*}
S(2,0;\alpha, -\beta, -\gamma)&=(-\gamma|\alpha)_2=-\gamma(-\gamma-\alpha)=\gamma(\gamma+\alpha)\\
S(2,1;\alpha, -\beta, -\gamma)&=S(1,0;\alpha, -\beta, -\gamma)+(-\beta-\alpha-\gamma)S(1,1;\alpha, -\beta, -\gamma)\\
&=(-\gamma|\alpha)_1-\beta-\alpha-\gamma\\
&=-\gamma-\beta-\alpha-\gamma\\
&=-2\gamma-\beta-\alpha
\end{align*}

\begin{align*}
a_3&=\frac{A^{1,x}_3(\alpha,\beta,\gamma)}{3!}=\frac{1}{6}\sum_{k=0}^3(-1)^{3+k}\beta^k\ k!\ S(3,k;\alpha, -\beta, -\gamma)x^k\\
&=\frac{1}{6}\Big((-1)\ S(3,0;\alpha,-\beta,-\gamma)+\beta \ S(3,1;\alpha,-\beta,-\gamma)x-\beta^2\ 2!\ S(3,2;\alpha,-\beta,-\gamma)x^2\\&\quad+\beta^3 \ 3!\ S(3,3;\alpha,-\beta,-\gamma)x^3\Big)\\
&=\frac{1}{6}\Big((\gamma|-\alpha)_3+\beta((-\gamma|\alpha)_2+(\beta+\gamma+2\alpha)(\beta+2\gamma+\alpha))x-2\beta^2(-3(\beta+\alpha+\gamma))x^2\\&\quad+6\beta^3 x^3\Big)\\
&=\frac{1}{6}\left\{(\gamma|-\alpha)_3+\beta\left[(-\gamma|\alpha)_2+(\beta+\gamma+2\alpha)(\beta+2\gamma+\alpha)\right]x+6\beta^2(\beta+\alpha+\gamma)x^2+6\beta^3 x^3\right\}
\end{align*}
where
\begin{align*}
S(3,0;\alpha,-\beta,-\gamma)&=(-\gamma|\alpha)_3=(-\gamma)(-\gamma-\alpha)(-\gamma-2\alpha)=-(\gamma|-\alpha)_3\\
S(3,1;\alpha,-\beta,-\gamma)&=S(2,0;\alpha,-\beta,-\gamma)+(-\beta-2\alpha-\gamma)S(2,1;\alpha,-\beta,-\gamma)\\
&=(-\gamma|\alpha)_2+(-\beta-2\alpha-\gamma)(-2\gamma-\beta-\alpha)\\
&=(-\gamma|\alpha)_2+(\beta+\gamma+2\alpha)(\beta+2\gamma+\alpha)\\
S(3,2;\alpha,-\beta,-\gamma)&=S(2,1;\alpha,-\beta,-\gamma)+(-2\beta-2\alpha-\gamma)S(2,2;\alpha,-\beta,-\gamma)\\
&=-2\gamma-\beta-\alpha-2\beta-2\alpha-\gamma\\
&=-3\gamma-3\beta-3\alpha\\
&=-3(\beta+\alpha+\gamma)
\end{align*}
Consequently,
\begin{align*}
W(n,0)&=\frac{1}{n!}a^n_1=\frac{1}{n!}(\gamma+\beta x)^n\\
W(n,1)&=\frac{1}{(n-2)!}a_1^{n-2}a_2=\frac{1}{(n-2)!}(\gamma+\beta x)^{n-2}\cdot \frac{1}{2}\left((\gamma|-\alpha)_2+\beta(2\gamma+\beta+\alpha)x+2\beta^2 x^2\right)\\
&=\frac{1}{2!(n-2)!}(\gamma+\beta x)^{n-2}\left((\gamma|-\alpha)_2+\beta(\beta+2\gamma+\alpha)x+2\beta^2 x^2\right)\\
W(n,2)&=\frac{1}{(n-3)!}a_1^{n-3}a_3+\frac{1}{2!(n-4)!}a_1^{n-4}a_2^2\\
&=\frac{1}{(n-3)!}(\gamma+\beta x)^{n-3}\cdot \frac{1}{6}\bigg\{(\gamma|-\alpha)_3+\beta((-\gamma|\alpha)_2+(\beta+\gamma+2\alpha)(\beta+2\gamma+\alpha))x\\&\quad+6\beta^2(\beta+\alpha+\gamma)x^2
+6\beta^3 x^3\bigg\}\\&\quad+\frac{1}{2!(n-4)!}(\gamma+\beta x)^{n-4}\left(\frac{1}{2}\left((\gamma|-\alpha)_2+\beta(\beta+2\gamma+\alpha)x+2\beta^2 x^2\right)\right)^2
\end{align*}
\begin{align*}
&=\frac{1}{3!(n-3)!}(\gamma+\beta x)^{n-3}\bigg\{(\gamma|-\alpha)_3+\beta((-\gamma|\alpha)_2+(\beta+\gamma+2\alpha)(\beta+2\gamma+\alpha))x\\&\quad+6\beta^2(\beta+\alpha+\gamma)x^2\quad+6\beta^3 x^3\bigg\}\\&\quad+\frac{3}{4!(n-4)!}(\gamma+\beta x)^{n-4}\left\{(\gamma|-\alpha)_2+\beta(\beta+2\gamma+\alpha)x+2\beta^2 x^2\right\}^2
\end{align*}
Using (2),
\begin{equation*}
\frac{A^{\lambda,x}_n(\alpha, \beta,\lambda\gamma)}{(\lambda)_n n!}\sim W(n,0)+\frac{W(n,1)}{(\lambda-n+1)_1}+\frac{W(n,2)}{(\lambda-n+2)_2}.
\end{equation*}
That is,
\begin{align*}
A^{\lambda,x}_n(\alpha, \beta,\lambda\gamma)&\sim (\lambda)_n n!\cdot \frac{1}{n!}(\gamma+\beta x)^n+(\lambda)_{n-1} n!\frac{1}{2!(n-2)!}(\gamma+\beta x)^{n-2}((\gamma|-\alpha)_2\\&+\beta(\beta+2\gamma+\alpha)x+2\beta^2 x^2)+(\lambda)_{n-2} n!\cdot \frac{1}{3!(n-3)!}(\gamma+\beta x)^{n-3}\bigg\{(\gamma|-\alpha)_3\\&+\beta((-\gamma|\alpha)_2+(\beta+\gamma+2\alpha)(\beta+2\gamma+\alpha))x+6\beta^2(\beta+\alpha+\gamma)x^2+6\beta^3 x^3\bigg\}\\&+(\lambda)_{n-2} n!\frac{3}{4!(n-4)!}(\gamma+\beta x)^{n-4}\Big\{(\gamma|-\alpha)_2+\beta(\beta+2\gamma+\alpha)x+2\beta^2 x^2\Big\}^2,
\end{align*}
which can be written,
\begin{align*}
A^{\lambda,x}_n(\alpha, \beta,\lambda\gamma)&\sim (\lambda)_n(\gamma+\beta x)^n+(\lambda)_{n-1}\binom{n}{2}(\gamma+\beta x)^{n-2}\Big\{(\gamma|-\alpha)_2+\beta(\beta+2\gamma+\alpha)x\\&+2\beta^2 x^2 \Big\}
+(\lambda)_{n-2}\binom{n}{3}(\gamma+\beta x)^{n-3}\bigg\{(\gamma|-\alpha)_3+\beta((-\gamma|\alpha)_2\\&+(\beta+\gamma+2\alpha)(\beta+2\gamma+\alpha))x+6\beta^2(\beta+\alpha+\gamma)x^2+6\beta^3 x^3 \bigg\}\\&+3(\lambda)_{n-2}\binom{n}{4}(\gamma+\beta x)^{n-4}\Big\{(\gamma|-\alpha)_2
+\beta(\beta+2\gamma+\alpha)x+2\beta^2 x^2\Big\}^2.
\end{align*}

\end{document}